\newtheorem{thm}{Theorem}[section]
\newtheorem{prop}[thm]{Proposition}
\newtheorem{cor}[thm]{Corollary}
\newtheorem{lemma}[thm]{Lemma}
\newtheorem{preremark}[thm]{Remark}
\newenvironment{remark}{\begin{preremark}\rm}{\medskip \end{preremark}}
\numberwithin{equation}{section}
\newcommand{\R}{\mathbb R}
\newcommand{\eps}{\varepsilon}
\newcommand{\grad} {\nabla}
\newcommand{\dd} {\; \mathrm{d}}
\newcommand{\ts} {\gamma}
\DeclareMathOperator{\tr} {\mathrm{tr}}
\title{Estimates on elliptic equations that hold only where the
  gradient is large}
\author{C. Imbert\footnote{CNRS, UMR 8050 \& Laboratoire d'analyse et
    de math\'ematiques appliqu\'ees, Universit\'e Paris-Est
    Cr\'eteil Val de Marne, 61 avenue du g\'en\'eral de Gaulle, 94010
    Cr\'eteil cedex, France}~ and L. Silvestre\footnote{Mathematics
    Department, University of Chicago, Chicago, Illinois 60637, USA}}
\begin{document}
\maketitle

\begin{abstract}
We consider a function which is a viscosity solution of a uniformly
elliptic equation only at those points where the gradient is large. We
prove that the H\"older estimates and the Harnack inequality, as in
the theory of Krylov and Safonov, apply to these functions.
\end{abstract}

\section{Introduction}

This paper is concerned with deriving estimates for functions
satisfying a uniformly elliptic equation only at points where the
gradient is large. For such functions, we prove a H\"older estimate
together with a Harnack inequality.

Intuitively, wherever the gradient of a function $u$ is small, the
function will be Lipschitz, so we should not need any further
information from the equation at those points in order to obtain a
H\"older regularity result. However, there is an obvious difficulty in
carrying out this proof since we do not know a priori where $|\grad
u|$ will be large and where it will be small, and these sets may be
very irregular. Moreover, the proofs of regularity for elliptic
equations involve integral quantities in the whole domain which are
hard to obtain unless the equation holds everywhere. As an extra
technical difficulty, we consider viscosity solutions which are not
even differentiable a priori.

The main contribution of this paper is the way the so-called $L^\eps$
estimate is derived. We recall that deriving an $L^\eps$ estimate
consists in getting a ``good'' estimate on the size of the superlevel
set of a non-negative super-solution that is small at least at one
point. In the uniformly elliptic case, this estimate is obtained
thanks to the pointwise Alexandrov-Bakelman-Pucci estimate. Here, we
proceed differently by estimating directly the measure of the set of
points where the super-solution can be touched by cusps from
below. This idea was inspired by \cite{cabre1997nondivergent} and
\cite{savin}, where a similar argument is carried out with paraboloids
instead of cusps. We strongly believe that a proof based on applying the ABP
estimate to the difference of the solution and a particular function, as in \cite{cc}, \cite{ks79} or \cite{safonov}, cannot be done for the result of this paper.

\paragraph{Main results.} In order to state our results, the notion of
super-solutions and sub-solutions ``for large gradients'' should be
made precise. We do it by introducing some extremal operators
depending on the ellipticity constants $\lambda$ and $\Lambda$ and
also on a parameter $\ts$ which measures how large the
gradient should be. They coincide with the classical Pucci operators (plus first
order terms) when $|\grad u| \ge \ts$, but provide no information
otherwise.  We will be dealing with merely (lower or upper)
semi-continuous functions and their gradients together with equations
will be understood in the viscosity sense.  For a $C^2$ function $u
\colon \Omega \subset \R^d \to \R$, we consider
\begin{align*}
M^+(D^2u, \grad u) &= \begin{cases}
\Lambda \tr D^2u^+ - \lambda \tr D^2u^- + \Lambda |\grad u|& \text{if } |\grad u| \ge \ts \\
+\infty &\text{otherwise,}
\end{cases} \\
M^-(D^2u, \grad u) &= \begin{cases}
\lambda \tr D^2u^+ - \Lambda \tr D^2u^- - \Lambda |\grad u|& \text{if } |\grad u| \ge \ts \\
-\infty &\text{otherwise.}
\end{cases}
\end{align*}

The main theorem of this paper is the following H\"older estimate. 
\begin{thm}[H\"older estimate] \label{t:holder}
For any continuous function $u:\overline{B_1} \to \R$ such that
\begin{align*}
& M^-(D^2u, \grad u) \leq C_0 \text{ in } B_1, \\
& M^+(D^2u, \grad u) \geq -C_0 \text{ in } B_1, \\
& \|u\|_{L^\infty(B_1)} \leq C_0,
\end{align*}
then $u \in C^\alpha(B_{1/2})$ and
\[\|u\|_{C^\alpha(B_{1/2})} \leq C C_0 \]
 where $C$ depends on $\lambda$, $\Lambda$, dimension and $\gamma /
 C_0$ and $\alpha$ depends on $\lambda$, $\Lambda$ and dimension.
\end{thm}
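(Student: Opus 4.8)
The proof follows the classical two-step scheme of Krylov--Safonov theory, specialized to the large-gradient setting. After the normalization $v:=u/C_0$, the hypotheses become $M^-(D^2v,\grad v)\le 1$ and $M^+(D^2v,\grad v)\ge -1$ in $B_1$ with $\|v\|_{L^\infty(B_1)}\le 1$, but now with the gradient threshold $\gamma$ replaced by $\gamma/C_0$; this is where the dependence of the final constant on $\gamma/C_0$ originates, and from now on we may take $C_0=1$. The engine of the proof is a \emph{weak Harnack ($L^\eps$) estimate}: there exist universal $\eps>0$ and $C_1>0$ (also depending on an upper bound for the threshold) such that any nonnegative $w$ with $M^-(D^2w,\grad w)\le\delta_0$ in $B_1$, for a fixed small universal $\delta_0$, satisfies
\[
\inf_{B_{1/4}} w \;\ge\; c\,\|w\|_{L^\eps(B_{1/2})} - C_1\delta_0 .
\]
A harmless rescaling to a ball of small radius $r_0\sim\delta_0$ reduces the right-hand sides in the theorem to be below $\delta_0$ while only \emph{shrinking} the gradient threshold (because, for $r_0\le 1$, the first-order term of $M^\pm$ scales like $r_0$ and the second-order term like $r_0^2$), so this step costs nothing.

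\textbf{From $L^\eps$ to decay of oscillation.} Assume $0\le v\le 2$ in $B_1$. One of the two sets $\{v\le 1\}\cap B_{1/2}$ and $\{v\ge 1\}\cap B_{1/2}$ has measure at least $\tfrac12|B_{1/2}|$. If it is $\{v\ge 1\}\cap B_{1/2}$, we apply the $L^\eps$ estimate directly to $w:=v$, which is a nonnegative supersolution since $M^-(D^2v,\grad v)\le\delta_0$; the measure bound gives $\|w\|_{L^\eps(B_{1/2})}\ge c>0$, hence $v\ge c_0>0$ on $B_{1/4}$. If it is $\{v\le 1\}\cap B_{1/2}$, we apply the same estimate to $w:=2-v$, using the pointwise identity $M^-(D^2(2-v),\grad(2-v))=-M^+(D^2v,\grad v)\le\delta_0$, and conclude $v\le 2-c_0$ on $B_{1/4}$. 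Either way $\osc_{B_{1/4}}v\le 2-c_0$, so with $\theta:=c_0/2$ and any $\rho\le 1/4$ we obtain a diminish-of-oscillation step. Only genuinely nonnegative one-sided (super)solutions are ever fed to the $L^\eps$ estimate, which is precisely why the asymmetric information carried by $M^\pm$ suffices. Theorem~\ref{t:holder} then follows by iterating this step on the balls $B_{\rho^k}$: the crucial bookkeeping is that passing from $v$ on $B_{\rho^k}$ to its rescaling on $B_1$ multiplies the gradient threshold by $\rho/(1-\theta)$, so if $\rho$ is chosen $\le 1-\theta$ the effective threshold never grows and the hypotheses (with a \emph{fixed} threshold) reproduce at every scale; the resulting geometric decay of the oscillation on dyadic balls is equivalent to a $C^\alpha$ bound with $\alpha=\alpha(\lambda,\Lambda,d,\gamma/C_0)$.

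\textbf{The $L^\eps$ estimate, and the main obstacle.} The heart of the argument --- and the step I expect to be hardest --- is proving the $L^\eps$ estimate \emph{without} the Alexandrov--Bakelman--Pucci estimate, since ABP is an integral statement over the whole domain and is unavailable when the equation only holds where $|\grad w|\ge\gamma$. Following the philosophy of \cite{cabre1997nondivergent,savin} but replacing paraboloids by \emph{cusps}, one fixes a model cusp $\Phi$ (smooth, convex, with $|\grad\Phi|$ blowing up at its single vertex), and for vertices $y$ ranging over a fine grid one slides the translated--dilated cusps $x\mapsto a\,\Phi\big((x-y)/\rho_0\big)+h$ upward from below until they first touch the graph of $w$. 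At a first contact point $x_0$ one has, in the viscosity sense, $\grad w(x_0)=a\rho_0^{-1}\grad\Phi((x_0-y)/\rho_0)$ and $D^2w(x_0)\ge a\rho_0^{-2}D^2\Phi((x_0-y)/\rho_0)$; away from the vertex the cusp has large gradient, so $|\grad w(x_0)|\ge\gamma$ and the inequality $M^-(D^2w,\grad w)(x_0)\le\delta_0$ is informative, pinning $D^2w(x_0)$ between the positive-definite cusp Hessian and the bound forced by the operator. An area formula / change of variables for the contact map $y\mapsto x_0(y)$, combined with this two-sided Hessian control and the fact that $w$ is small at some point, bounds the measure of the contact set, and stacking these bounds over dyadic scales of the cusp (the usual Calder\'on--Zygmund ``stacked cubes'' iteration) upgrades them to the power-law decay $|\{w>t\}\cap B_{1/2}|\le C t^{-\eps}$. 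The technical delicacies are that (i) everything must be carried out for merely semicontinuous viscosity solutions, so the touching is only in the viscosity sense and one must check that contact occurs where the operator is informative; (ii) a shrinking neighborhood of each cusp vertex, where $|\grad\Phi|$ may drop below the threshold, must be excised and shown to contribute negligibly; and (iii) the iteration that produces $\eps$ must be made compatible with the rescaling of the threshold identified above.
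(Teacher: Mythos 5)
Your proposal follows the paper's strategy closely: normalize and rescale so the gradient threshold and right-hand side become small, prove an $L^\eps$ (weak Harnack) estimate by sliding cusps from below and controlling the measure of the contact set via the area formula for the contact map $y \mapsto x$, and then iterate a diminish-of-oscillation argument with the key observation that the rescaled threshold stays controlled provided the oscillation decays no faster than the radius, applying the one-sided $L^\eps$ estimate to $v-a_k$ or to $b_k-v$ according to the usual dichotomy. The only noteworthy differences are cosmetic: you invoke the classical Calder\'on--Zygmund stacked-cubes iteration to upgrade the measure estimate to power-law decay, whereas the paper uses a Vitali-covering ``growing ink-spots'' lemma for the same purpose; and a small slip in your sketch of the cusp geometry --- the profile $\varphi(x)=-|x|^{1/2}$ is neither convex nor smooth at the vertex, and $|\grad\varphi|$ is \emph{large} near the vertex and smaller away from it, the vertex neighborhood being irrelevant because the height gap $u(x)>M$, $u(y)\le M-1$ forces contact points a definite distance from the vertex --- does not affect the validity of the strategy.
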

\begin{remark}
The constant $C$ in Theorem~\ref{t:holder} grows like $(\gamma /
 C_0)^\alpha$ as $\gamma /
 C_0$ tends to $+\infty$. That is
\[ C(d, \lambda, \Lambda, \gamma /
 C_0) = \tilde C(d, \lambda, \Lambda) \left( 1+(\gamma /C_0)^\alpha \right).\]

Note that when $\gamma=0$, then the constant $C$ becomes independent of $C_0$ and we recover the classical estimate for uniformly elliptic equations.
\end{remark}

Our second main result is the following Harnack inequality.
\begin{thm}[Harnack inequality] \label{t:harnack}
For any non-negative continuous function $u:\overline{B_1} \to \R$ such that
\begin{align*}
& M^-(D^2u, \grad u) \leq C_0 \text{ in } B_1, \\
& M^+(D^2u, \grad u) \geq -C_0 \text{ in } B_1, \\
\end{align*}
we have
\[ \sup_{B_{1/2}} u \leq C ( \inf_{B_{1/2}} u +C_0).\]
The constant $C$ depends on $\lambda$, $\Lambda$, dimension and
$\gamma/ (C_0 + \inf_{B_{1/2}} u)$.
\end{thm}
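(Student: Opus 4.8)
\emph{Strategy and reduction.} The plan is to follow the classical Krylov--Safonov scheme in two steps --- a weak Harnack (or $L^\eps$) inequality for the non-negative supersolution, and a local maximum principle for the subsolution --- which combine to give the Harnack inequality; the only structural change forced on us is that every barrier must have large gradient wherever the equation is invoked, so paraboloids are replaced by cusps, as announced in the introduction. First I would normalize: adding a constant and multiplying by a positive constant leave the two differential inequalities invariant up to rescaling $C_0$ and $\gamma$ (the operators $M^\pm$ are positively $1$-homogeneous above the threshold), so replacing $u$ by $u/(C_0+\inf_{B_{1/2}}u)$ reduces the statement to the case $u\ge 0$ in $B_1$, $\inf_{B_{1/2}}u\le 1$, $M^-(D^2u,\grad u)\le \eps_0$ and $M^+(D^2u,\grad u)\ge -\eps_0$ in $B_1$ for a small $\eps_0$, with a new threshold $\gamma'=\gamma/(C_0+\inf_{B_{1/2}}u)$; the goal becomes $\sup_{B_{1/2}}u\le C$ with $C=C(d,\lambda,\Lambda,\gamma')$ nondecreasing in $\gamma'$, which is precisely the asserted dependence. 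A Calder\'on--Zygmund covering/chaining argument reconciles concentric balls of comparable size and lets one shrink the right-hand side.

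\emph{Step 1: weak Harnack via cusps (the core).} I would prove: if $u\ge 0$ in a slightly enlarged ball, $M^-(D^2u,\grad u)\le \eps_0$, and $\inf_{B_{1/2}}u\le 1$, then
\[
|\{u>t\}\cap B_{1/2}|\le C\,t^{-\eps},\qquad t\ge 1,
\]
with $C,\eps$ depending on $d,\lambda,\Lambda$ and (nondecreasingly) on $\gamma'$, whence weak Harnack $\|u\|_{L^\eps(B_{1/2})}\le C(\inf_{B_{1/4}}u+\eps_0)$. Instead of the pointwise ABP estimate I would use cusp profiles $\Gamma_{\bar x}(x)=a\,|x-\bar x|^\beta$ with $\beta=\beta(d,\lambda,\Lambda)\in(0,1)$ close enough to $1$ that $\lambda(d-1)>\Lambda(1-\beta)$. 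A direct computation (the radial eigenvalue of $D^2\Gamma_{\bar x}$ is the only negative one, of size $\sim a|x-\bar x|^{\beta-2}$, while $\Lambda|\grad\Gamma_{\bar x}|\sim a|x-\bar x|^{\beta-1}$ is of lower order near the vertex) shows that on a small annulus around $\bar x$ one has simultaneously $M^-(D^2\Gamma_{\bar x},\grad\Gamma_{\bar x})>\eps_0$ and $|\grad\Gamma_{\bar x}|=a\beta|x-\bar x|^{\beta-1}\ge\gamma'$, the latter being the whole point of using a cusp --- the large-gradient regime is entered for free near the singularity --- and the former being arranged by the choice of $a$ (this is where $\gamma'$ enters). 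Consequently no vertical translate $\Gamma_{\bar x}+c$ can touch a viscosity supersolution from below on that annulus. Sliding a family of such cusps (varying the vertex $\bar x$, and if needed adding small linear tilts) up against $u$ and tracking the contact points, I would show directly --- without any integral identity over $B_1$, which is what makes the argument robust to the a priori wild low-gradient set --- that the contact set covers a definite fraction of $B_{1/2}$, so that $|\{u\le M_0\}\cap B_{1/2}|\ge\mu>0$; iterating this point-to-measure estimate on a dyadic cube decomposition (Caffarelli--Cabr\'e style, rescaling the cusp on each cube) upgrades it to the stated power decay. Viscosity solutions are handled by touching with the smooth part of the cusp, after an inf-convolution regularization if necessary.

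\emph{Step 2: local maximum principle and conclusion.} By the dual construction --- peak cusps $-a|x-\bar x|^\beta$, for which a computation gives $M^+<-\eps_0$ and $|\grad|\to+\infty$ near the vertex, slid down to touch $u$ from above --- I would obtain $\sup_{B_{1/4}}u\le C(\|u^+\|_{L^\eps(B_{1/2})}+\eps_0)$ for the same exponent $\eps$; on the unknown set where $|\grad u|<\gamma'$ the equation is vacuous, but there $u$ is $\gamma'$-Lipschitz, and a Lipschitz-or-elliptic dichotomy closes that gap (this is also where a factor $\sim(\gamma')^\eps$ enters $C$). Combining with Step~1 applied to $u$ itself yields $\sup_{B_{1/4}}u\le C(\inf_{B_{1/4}}u+\eps_0)$, and a chaining/covering argument together with the scaling of the hypotheses upgrades this to the stated form on $B_{1/2}$; undoing the normalization of the first step produces the dependence on $\gamma/(C_0+\inf_{B_{1/2}}u)$.

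\emph{Main obstacle.} The real difficulty is Step~1: replacing the ABP estimate by the cusp measure estimate. One must simultaneously arrange that the barrier's gradient exceeds $\gamma'$ and that its Pucci extremal operator lies on the forbidden side of the inequality on the same region; establish the ``contact set has definite measure'' claim by a purely geometric argument that does not see the low-gradient set; make the dyadic iteration function with cusps of varying scale; and carry the (polynomial, cf.\ the Remark) dependence on $\gamma'$ through every step. The subsolution side is comparatively routine once the supersolution side is in place.
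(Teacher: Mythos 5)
Your two-step programme (weak Harnack for supersolutions, local maximum principle for subsolutions, then combine) is the textbook Krylov--Safonov layout, but it is \emph{not} the route this paper takes, and your Step~2 contains a genuine gap.

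The paper never proves a separate local maximum principle. After the same normalization you describe (divide by $C_0+\inf_{B_{1/2}}u$, which rescales $\gamma$), it runs the single-barrier Caffarelli--Cabr\'e argument: slide the explicit function $h_t(x)=t(3/4-|x|)^{-\beta}$ down until it touches $u$ from above at some $x_0$, and then estimate the measure of $\{u\ge H_0/2\}$ near $x_0$ \emph{twice}. The upper bound comes from the $L^\eps$ estimate (your Step~1). The lower bound comes from applying Corollary~\ref{c:weakweakharnack} not to $u$ but to an affine reflection $v$ proportional to $\textrm{const}-u$: since $M^+(D^2u,\grad u)\ge -C_0$ is exactly $M^-(D^2(-u),\grad(-u))\le C_0$, the reflected function is again a non-negative supersolution and the \emph{same} point-to-measure estimate that you already have is directly applicable. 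No separate machinery for subsolutions is needed, and in particular the low-gradient set never has to be touched.

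This is where your Step~2 breaks down. You invoke a ``Lipschitz-or-elliptic dichotomy'': where $|\grad u|<\gamma'$ ``the equation is vacuous, but there $u$ is $\gamma'$-Lipschitz, and a dichotomy closes the gap.'' That is precisely the naive intuition the introduction warns against --- the set $\{|\grad u|<\gamma'\}$ is not known a priori, may be wildly irregular, and the pointwise statement ``$|\grad u(x)|<\gamma'$'' does not localize to any Lipschitz bound on a neighbourhood; for viscosity solutions $\grad u$ need not even exist. There is no dichotomy to invoke. If instead you carried your peak-cusp contact argument through honestly (noting that at contact the gradient of the cusp is large near the vertex, so the equation does engage, and that the Hessian bound needed for the Jacobian change of variables comes out), you would in effect be redoing the supersolution argument for $-u$ --- i.e.\ the reflection --- without realizing it; the dichotomy hand-wave is a substitute for recognizing that.

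Two smaller remarks on Step~1, which otherwise matches the paper's Lemma~\ref{l:measure-estimate} and Theorem~\ref{t:Lepsilon} in spirit. First, your cusp $a|x-\bar x|^\beta$ with $\beta$ close to $1$ is the opposite sign and the wrong end of the exponent range from the paper's $\varphi(x)=-10|x|^{1/2}$; the choice matters because the Jacobian bound $|Dm|\le C$ in the contact-set argument requires $\|D^2\varphi^{-1}\|\,\|D^2\varphi-D^2u\|$ to be bounded uniformly in $|x-y|$, and the paper explicitly flags that this cancellation fails for some cusps (e.g.\ $\varphi=-|x|$). With $\beta\to 1$ the eigenvalue ratio of $D^2\varphi$ degenerates like $(1-\beta)^{-1}$, which would pollute your constants. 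Second, your heuristic that the cusp ``cannot touch from below on an annulus'' because $M^->\eps_0$ there is a barrier-style argument (akin to the doubling Lemma~\ref{l:doubling}); the paper's measure estimate instead \emph{uses} the contact points and controls the Jacobian of the map from contact point to vertex. Conflating the two mechanisms obscures what actually delivers the measure bound.
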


We would like to emphasize that the result stated in terms of the
extremal operators $M^+$ and $M^-$ is more general than a result which
specifies equations of a particular form. A more classical way to
write the assumption of Theorem \eqref{t:holder} would be that for
some uniformly elliptic measurable coefficients $a_{ij}(x)$, a bounded
vector field $b_j(x)$ and a bounded function $c(x)$, the function $u$
satisfies
\begin{equation} \label{e:linear-measurable}  
a_{ij}(x) \partial_{ij} u + b_i(x) \partial_i u = c(x) \ \ \text{only where } |\grad u(x)| \geq \gamma.
\end{equation}
This statement is equivalent to the assumption of our theorems if $u$
is a classical solution to the equations. Our statement with the
extremal operators $M^+$ and $M^-$ is more adequate for the viscosity
solution framework. Note also that a bounded solution to a nonlinear
equation would also satisfy our assumptions if the equation is of the
form
\[ F(D^2 u , Du, u, x) = 0,\]
and satisfies the conditions
\begin{enumerate}
\item $F(0,p,r,x) \leq C(r) |p|$ if $|p|\geq \gamma$.
\item For every fixed $p$, $r$ and $x$ such that $|p|\geq \gamma$,
  $F(A,p,r,x)$ is uniformly elliptic in $A$.
\end{enumerate}
In fact, in the case of classical solutions (or even $W^{2,d}$
solutions), this nonlinear situation is not more general than
\eqref{e:linear-measurable}, since in particular we could obtain
\eqref{e:linear-measurable} by linearizing the equation.

As mentioned above, both Theorem \ref{t:holder} and Theorem
\ref{t:harnack} derive from a so-called $L^\eps$ estimate (see
Theorem~\ref{t:Lepsilon}). Its proof is based upon a method which
seems to have originated in the work of Cabre
\cite{cabre1997nondivergent} and continued in the work of Savin
\cite{savin}. Such an idea has also been recently used in
\cite{as}. The idea is to estimate the measure of the super-level set
of super-solutions by sliding some specific functions from below and
estimating the measure of the set of contact points. In \cite{savin},
and also recently in \cite{as}, the use of the ABP estimate is
bypassed by sliding paraboloids from below. In
\cite{cabre1997nondivergent}, X. Cabre uses the distance function
squared which is a natural replacement of quadratic polynomials in a
Riemannian manifold. In \cite{as}, in order to prove the existence of
a special barrier function to their equation (see Lemma 3.3 in
\cite{as}), they slide from below a barrier to a simpler equation. In
the present paper, we slide cusps functions of the form $\varphi(x) =
-|x|^{1/2}$.

We finally mention that we chose to state and prove results for
equations with bounded (by $C_0$) right hand sides. We do so for the
sake of clarity but, as the reader can check by following proofs
attentively, it is possible to deal with continuous right hand side
$f_0$ in equations and get estimates which only depend on the
$L^d$-norm of the function $f_0$.

Our definition of $M^+$ and $M^-$ also determines the type of gradient
dependence that we allow in our equations. In terms of linear
equations with measurable coefficients as in
\eqref{e:linear-measurable}, we are assuming that $b \in L^\infty$. In
the uniformly elliptic case, the best known estimate depends only on
$\|b\|_{L^d}$, which was obtained recently in \cite{safonov}. We have
not yet analyzed whether we can extend our result to that kind of
gradient dependence. We would also like to point out that we have not
been able to obtain a satisfactory parabolic version of our results
yet.

\paragraph{Known results.} We next explain how results stated in
\cite{dfq-10,imbert,bd-10} are related to the ones presented in this
paper.

In \cite{bd-10,dfq-10}, a Harnack inequality is derived for solutions
of some singular/degenerate equations. These solutions satisfy the
assumptions of the Harnack inequality, Theorem~\ref{t:harnack}.

In \cite{imbert}, on the one hand, a Harnack inequality and H\"older
estimates are proved for functions satisfying the asumptions of this
article. Unfortunately, there is a gap in the proof of the lemma
corresponding to the $L^\eps$ estimate (see \cite[Lemma~7]{imbert}).
On the other hand, an Alexandrov-Bakelman-Pucci estimate is derived in
\cite{imbert}. The interested reader is also referred to
\cite{dfq-09,junges,cpcm} for other results for equations in
non-divergence form and \cite{acp} for equations in divergence form that are either degenerate or singular.

In \cite{delarue}, an equation of the following form is studied
\[-\tr(A(Du,u,x) D^2u) + f(x,u,Du) = 0\]
under the assumptions that
\begin{align*}
\Lambda^{-1} \lambda(p) \mathrm I \leq A(x,r,p) \leq \Lambda \lambda(p) \mathrm I, \\
|f(x,r,p)| \leq \frac 12 \Lambda (1+\lambda(p)) (1+|p|)
\end{align*}
where $\lambda(p) \geq \lambda_0 > 0$ for $|p| \geq \ts$. The main
theorem of his paper is a H\"older continuity result, which is proved
using probabilistic techniques. Note that the assumptions of our
theorems contain this situation. The most important difference between
the result in \cite{delarue} and ours is that in that paper the
equation plays some role even where $|p|$ is small, since it is
important in its proof that all the eigenvalues of $A$ are comparable
at every point.

\paragraph{Organization of the paper.} The paper is organized as
follows. In Section~\ref{s.pre}, we introduce tools that will be used
in proofs. In Section~\ref{sec:main}, we state and prove the main new
lemma. It is a measure estimate satisfied by non-negative
super-solutions. In Section~\ref{sec:leps}, we deduce a so-called
$L^\eps$ estimate from the main new lemma. In
Section~\ref{sec:holder}, a H\"older estimate is derived from the
$L^\eps$ estimate. The last section, Section~\ref{sec:harnack}, is
devoted to the proof of the Harnack inequality stated above.

\section{Preliminaries}
\label{s.pre}

\subsection{Scaling}

In this short subsection, we analyze how the equations involving
$M^\pm$ change according to scaling. Those facts will be used
repeatedly in Sections~\ref{sec:barrier} -- \ref{sec:harnack}.

If $u$ satisfies $M^+(D^2 u, \grad u) \geq A$ in $\Omega$, then $v(x)
= K u(x_0 + r x)$ satisfies the equation $M_{r,K}^+(D^2 v, \grad
v) \geq K r^2 A$ in $x_0 + r \Omega$, where
\[ M^+_{r,K}(D^2 v, \grad v) = \begin{cases}
\Lambda \tr D^2v^+ - \lambda \tr D^2v^- + r \Lambda |\grad v|&
\text{if } |\grad v| \ge rK  \ts \\
+\infty &\text{otherwise.}
\end{cases} \]
Note that if $r \leq 1$ and $K \ge 1$ then $M^+_{r,K} \leq
M^+$. Therefore, in particular, $M^+(D^2 v, \grad v) \geq r^2 K A$ in
$x_0 + r \Omega$.

Likewise, if $M^-(D^2 u, \grad u) \leq A$ in $\Omega$, then $M^-(D^2
v, \grad v) \leq r^2 K A$ in $x_0 + r\Omega$.

\subsection{The growing ink-spots lemma} 

In this section, we state and prove a consequence of Vitali's covering
lemma. This result replaces the usual Cald\'eron-Zygmund decomposition
\cite{cc} in order to derive a so-called $L^\eps$ estimate from
Corollary~\ref{c:weakweakharnack}. It is a statement from measure theory which is essentially the same that was used in the original work by Krylov and Safonov \cite{ks79}. The suggestive name \emph{growing} (or \emph{crawling}) \emph{ink spots} was nailed by E. M. Landis according to \cite{ks80}.

\begin{lemma}[Growing ink-spots lemma] \label{t:inkspots-vitali}
Let $E \subset F \subset B_1$ be two open sets. We make the
following two assumptions for some constant $\delta \in (0,1)$.
\begin{itemize}
\item If any ball $B \subset B_1$ satisfies $|B \cap E| > (1-\delta)
  |B|$, then $B \subset F$.
\item $|E| \leq (1-\delta) |B_1|$. 
\end{itemize}
Then $|E| \leq (1-c\delta) |F|$ for some constant $c$ depending on
dimension only.
\end{lemma}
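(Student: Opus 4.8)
The plan is to prove this via a Vitali covering argument applied to the set $E$, using the first hypothesis to transfer "most-of-ball-in-$E$" information to "ball-in-$F$" information. The key geometric observation is that since $|E| < |B_1|$ and $E$ is open, each point $x \in E$ has a well-defined "last scale" at which the ball around it is still $(1-\delta)$-full of $E$: concretely, for $x \in E$, since $|B_r(x) \cap E|/|B_r(x)| \to 1$ as $r \to 0$ (Lebesgue density, $E$ open) but the ratio cannot stay above $1-\delta$ as $r$ grows (because $|E| \le (1-\delta)|B_1|$ and eventually $B_r(x) \supset B_1$, or one restricts to balls inside $B_1$ and uses the density bound), there is a maximal radius $r_x > 0$ with $|B_{r_x}(x) \cap E| = (1-\delta)|B_{r_x}(x)|$. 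By the first hypothesis, $B_{r_x}(x) \subset F$.

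First I would set up this family of balls $\{B_{r_x}(x)\}_{x \in E}$ — one should be slightly careful that radii stay bounded (e.g. intersect with $B_1$ or a slightly larger ball, or note $r_x$ is controlled since for $r$ large the measure ratio drops below $1-\delta$), so that Vitali's covering lemma applies. Vitali then extracts a countable disjoint subfamily $\{B_i = B_{r_i}(x_i)\}$ such that $E \subset \bigcup_i 5 B_i$ (the $5$-times dilates cover $E$, up to the usual measure-zero caveat which is irrelevant here). Then I would estimate
\[
|E| \le \sum_i |5B_i| = 5^d \sum_i |B_i|.
\]
Now comes the point where both hypotheses pay off: on each $B_i$ we have $|B_i \cap E| = (1-\delta)|B_i|$, equivalently $|B_i \setminus E| = \delta |B_i|$, so $|B_i| = \delta^{-1} |B_i \setminus E|$; and each $B_i \subset F$, while the $B_i$ are disjoint, so $\sum_i |B_i \setminus E| \le |F \setminus E| = |F| - |E|$. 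Combining,
\[
|E| \le 5^d \sum_i |B_i| = \frac{5^d}{\delta} \sum_i |B_i \setminus E| \le \frac{5^d}{\delta}\,(|F| - |E|),
\]
which rearranges to $|E|\,(1 + 5^d/\delta) \le (5^d/\delta)|F|$, i.e. $|E| \le \frac{5^d/\delta}{1 + 5^d/\delta}|F| = \left(1 - \frac{1}{1 + 5^d/\delta}\right)|F| = \left(1 - \frac{\delta}{\delta + 5^d}\right)|F|$. Since $\delta \in (0,1)$, we have $\delta + 5^d \le 1 + 5^d \le 2\cdot 5^d$, so $\frac{\delta}{\delta + 5^d} \ge \frac{\delta}{2\cdot 5^d}$, giving $|E| \le (1 - c\delta)|F|$ with $c = 5^{-d}/2$, depending on dimension only, as claimed.

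The main obstacle — really the only subtlety — is the rigorous justification that the maximal radius $r_x$ exists for each $x \in E$ and that the associated balls form a Vitali-admissible family with uniformly bounded radii. The existence uses continuity of $r \mapsto |B_r(x) \cap E|/|B_r(x)|$ together with the two endpoint facts (ratio $\to 1$ as $r \to 0^+$ by openness of $E$; ratio $\le 1-\delta$ for some finite $r$, using the global bound $|E| \le (1-\delta)|B_1|$ and monotonicity/containment arguments for balls reaching outside $B_1$). One also wants to make sure each $B_{r_x}(x)$ lands inside $B_1$ before invoking the first hypothesis, or else to state the hypothesis as applying to balls $B \subset B_1$ and restrict accordingly; handling the boundary behavior of $x$ near $\partial B_1$ cleanly is the one place requiring care, but it is a standard covering-lemma technicality rather than a genuine difficulty.
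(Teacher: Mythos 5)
Your plan follows the same overall strategy as the paper---a Vitali $5r$-covering argument together with the bookkeeping $|B_i \setminus E| \ge \delta|B_i|$ and disjointness of the $B_i \subset F$---but the specific way you build the ball family has a gap that is more than a routine technicality. You want, for each $x \in E$, a ball $B_{r_x}(x) \subset B_1$ with $|B_{r_x}(x) \cap E| = (1-\delta)|B_{r_x}(x)|$ exactly. For $x$ near $\partial B_1$ such a ball need not exist. Take $E = B_1 \setminus \overline{B_\eta}$ with $\eta^d = \delta$, so that $|E| = (1-\delta)|B_1|$; then for $x \in E$ with $1-|x|$ small, every ball $B_r(x) \subset B_1$ lies entirely in $E$, so the density is identically $1$ for all admissible $r$ and never drops to $1-\delta$. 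The alternative you mention---letting $r$ grow until $B_r(x) \supset B_1$---does produce a radius where the density equals $1-\delta$, but that ball is not contained in $B_1$, so the first hypothesis says nothing about it and you cannot put it inside $F$. For these $x$ your covering family simply has no member, and the fallback $B_{1-|x|}(x)$ breaks the bookkeeping since $|B \setminus E|$ can be $0$ there.

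The paper sidesteps this by covering $F$ rather than $E$, and by a different selection rule: for each $x \in F$, take a ball $B^x$ that is \emph{maximal by inclusion} among balls contained in $F$ and containing $x$ (not necessarily centered at $x$). Such a ball always exists since $F$ is open, with no boundary issue, and the density bound $|B^x \cap E| \le (1-\delta)|B^x|$ is then obtained \emph{a posteriori} from maximality together with the contrapositive of the first hypothesis: if the density were higher, a slightly enlarged ball $\tilde B \supsetneq B^x$---which can always be chosen inside $B_1$ when $B^x \ne B_1$, sliding the center inward as needed---would still have $E$-density above $1-\delta$, forcing $\tilde B \subset F$ and contradicting maximality; the case $B^x = B_1$ is handled directly by the second hypothesis. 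From that point the arithmetic is the one you wrote, giving $c = 5^{-d}$. So your endgame is correct; what needs to change is the construction of the covering family: maximal balls in $F$, not density-critical balls centered at points of $E$.
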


\begin{proof}
For every $x \in F$, since $F$ is open, there exists some maximal ball which is contained in $F$ and contains $x$. We choose one of those balls for each $x \in F$ and call it $B^x$.

If $B^x = B_1$ for any $x \in F$, then the result of the theorem follows immediately since $|E| \leq (1-\delta) |B_1|$, so let us assume that it is not the case.

We claim that $|B^x \cap E| \leq (1-\delta)|B^x|$. Otherwise, we could find a slightly larger ball $\tilde B$ containing $B^x$ such that $|\tilde B \cap E| > (1-\delta) |\tilde B|$ and $\tilde B \not\subset F$, contradicting the first hypothesis.

The family of balls $B^x$ covers the set $F$. By the \emph{Vitali covering lemma}, we can select a finite subcollection of non overlapping balls $B_j := B^{x_j}$ such that $F \subset \bigcup_{j=1}^K 5B_j$.

By construction, $B_j \subset F$ and $|B_j \cap E| \leq (1-\delta) |B_j|$. Thus, we have that $|B_j \cap F \setminus E| \geq \delta |B|$. Therefore
\begin{align*}
|F \setminus E| &\geq \sum_{j=1}^K |B_j \cap F \setminus E| \\
& \geq \sum_{j=1}^K \delta |B_j| \\
& = \frac{\delta}{5^d} \sum_{j=1}^K |5 B_j|  \geq \frac{\delta}{5^d} |F|.
\end{align*}
The proof is finished with $c = 1/5^d$.
\end{proof}

\section{Main new lemma}
\label{sec:main}

The lemma in this section is the main difference with the classical
case. It is the only lemma whose proof differs substantially with the
uniformly elliptic case ($\ts = 0$).

\begin{lemma}[A measure estimate] \label{l:measure-estimate}
There exist two small constants $\eps_0>0$ and $\delta>0$, and a
large constant $M>0$, so that if $\ts \leq \eps_0$, for any lower
semi-continuous function $u \colon B_1 \to \R$ such that
\begin{align*}
& u \geq 0 \text{ in } B_1, \\
& M^-(D^2u, \grad u) \leq 1 \text{ in } B_1, \\
& | \{u > M\} \cap B_1 | > (1-\delta) |B_1|, 
\end{align*}
then $u > 1$ in $B_{1/4}$.
\end{lemma}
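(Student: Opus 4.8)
The plan is to argue by contradiction and sliding cusps from below. Suppose that $u \le 1$ at some point $x_0 \in B_{1/4}$. Since $u \ge 0$ and $u(x_0) \le 1$, the function $u$ is small at one point of the small ball, and the hypothesis $|\{u > M\} \cap B_1| > (1-\delta)|B_1|$ says that $u$ is large on most of $B_1$. I want to derive a contradiction by producing, on a set of positive measure inside $\{u > M\}$, points where $u$ is touched from below by a vertical translate of a cusp $\varphi(x) = -a|x - y|^{1/2} + b$ with a fixed small opening parameter $a$; at each such contact point the cusp has $|\grad \varphi| \ge \ts$ (provided $\ts \le \eps_0$ is small, since the gradient of the cusp blows up near its vertex but we will be working at a definite distance from the vertex where it is still comparable to $a$), so the inequality $M^-(D^2 u, \grad u) \le 1$ applies there. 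A direct computation shows $D^2\varphi$ has one large positive eigenvalue in the radial direction and a large negative eigenvalue of comparable size in the tangential directions as one approaches the vertex, so near the vertex $M^-(D^2\varphi,\grad\varphi)$ is very negative — in fact $\le -C/|x-y|^{3/2} + \dots$, which is much less than $1$ close to the vertex. Hence contact points must stay a definite distance away from the vertex $y$, and at those points the second-order information forces a quantitative lower bound on $u$.

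The key steps, in order, are: (1) fix the cusp profile $\varphi(x) = -a|x|^{1/2}$, compute $\grad\varphi$ and $D^2\varphi$ explicitly, and record the two facts that for $|x|$ in a fixed annulus $r_1 \le |x| \le r_2$ one has $|\grad\varphi| \in [\ts, \Lambda^{-1}\cdot(\text{something})]$ when $\ts$ is small, and that $M^-(D^2\varphi,\grad\varphi) \le -c_0 < -1$ on that annulus for suitable $a$ — so a point where $u$ touches $\varphi$ from below and lies in this annulus is forbidden by the supersolution inequality; (2) for each vertex $y$ ranging over a suitable small ball, consider the maximal vertical shift $b(y)$ so that $\varphi(\cdot - y) + b(y) \le u$ on $\overline{B_1}$, which exists and is finite because $u$ is bounded below by $0$ and lower semicontinuous and $\varphi \to -\infty$ is irrelevant — the shift is controlled using $u(x_0) \le 1$ near $x_0$; (3) show the contact point $z(y)$ is interior to $B_1$ (using that $u$ is large near $\partial B_1$, on $\{u > M\}$, while the cusp with the computed shift is not that large there) and lies outside the small ball around $y$ where $M^-$ of the cusp is $< -1$ — this localizes $z(y)$ to the good annulus around $y$; (4) at $z(y)$, since $u$ is touched from below by a paraboloid-like (actually cusp) test function with bounded second derivatives on the annulus, conclude a pointwise lower bound $u(z(y)) \ge$ (value of the shifted cusp) and, more importantly, run a measure/counting argument: the map $y \mapsto z(y)$ covers a set of measure comparable to a fixed fraction of $|B_1|$, and on that set $u$ cannot exceed $M$ everywhere — contradicting $|\{u > M\}| > (1-\delta)|B_1|$ once $\delta$ is chosen small enough relative to that fixed fraction. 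The constants $\eps_0, \delta, M$ are then fixed: $\eps_0$ so the cusp gradient exceeds $\ts$ on the annulus, $a$ and the annulus radii so $M^-(D^2\varphi,\grad\varphi) < -1$ there, $M$ large enough that the shifted cusp stays below $M$ on the relevant covering set, and $\delta$ small enough to beat the covered fraction.

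The main obstacle I expect is Step (3)–(4): controlling where the contact point $z(y)$ can be and turning "one contact point per vertex $y$" into a genuine measure lower bound on the set of contact points. The difficulty, exactly as flagged in the introduction, is that we cannot use the ABP estimate here — the equation is vacuous where $|\grad u|$ is small, so the usual Jacobian-of-the-gradient-map computation is unavailable. Instead one has to exploit the geometry of the cusp: because the cusp has an unbounded slope near its vertex, sliding it up against $u$ from a range of vertices produces contact points that are spread out (two distinct vertices cannot share a contact point unless the corresponding shifts differ in a controlled way), and a Vitali-type / direct overlap argument — in the same spirit as the growing ink-spots lemma already proved — converts this into a measure bound. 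Making the geometry quantitative, i.e. showing that the set of $y$'s for which $z(y)$ lands in a given small ball has small measure, so that the image has measure bounded below, is the technical heart of the argument; the rest is bookkeeping with the scaling relations from Section~\ref{s.pre} and the explicit cusp computation.
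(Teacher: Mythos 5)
Your plan gets the right shape of the argument — slide cusps $\varphi(x) = -a|x|^{1/2}$ from vertices in $\{u > M\}$, force the contact set to lie in $\{u \le M-1\}$, and derive a contradiction from a measure count — and the cusp profile, the choice of the opening parameter, and the control of the contact point away from $\partial B_1$ are all correct and match the paper. But the technical core of your step (1)/(3) has the logic of the viscosity inequality exactly backwards, and the strategy built on it fails.

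You assert that because $M^-(D^2\varphi,\grad\varphi) \le -c_0 < -1$ in an annulus around the vertex, ``a point where $u$ touches $\varphi$ from below and lies in this annulus is forbidden by the supersolution inequality.'' That is not what the supersolution condition says. Since $u$ is a supersolution of $M^-(D^2u,\grad u) \le 1$, a smooth test function $\phi$ touching $u$ from below at $z$ must satisfy $M^-(D^2\phi(z),\grad\phi(z)) \le 1$. The inequality $M^-(D^2\varphi) \le -c_0 < -1 < 1$ is therefore \emph{consistent} with a contact point there, not forbidden. To forbid contact you would need $M^-(D^2\varphi) > 1$, but a short computation (which you essentially did: one positive radial eigenvalue $\tfrac a4|x|^{-3/2}$ and $d-1$ negative tangential eigenvalues $-\tfrac a2|x|^{-3/2}$) shows $M^-(D^2\varphi,\grad\varphi) \to -\infty$ at the vertex. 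So contact points can lie arbitrarily close to the vertex; there is no universal annulus to which they can be localized, and the premise of your step (3) collapses. (The paper notes that the gap $|y-x|>\eps$ holds only with an $\eps$ depending on the modulus of continuity of $u$ — not a universal constant, so it can only be used for the qualitative Lipschitz statement about $m$, never as a quantitative ingredient.)

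Relatedly, you dismiss ``the usual Jacobian-of-the-gradient-map computation'' as unavailable, but it is exactly what the paper uses — only with cusps instead of planes. The reason ABP fails is that sliding \emph{planes} (the convex envelope) puts contact points where the test-function gradient, hence $|\grad u|$, may be below $\gamma$, so the equation says nothing. Sliding cusps fixes this because $|\grad\varphi|\ge c > \gamma$ everywhere in the relevant range. The paper parametrizes by the contact point $y$, defines a single-valued map $m(y)=x$ (the vertex is determined by $\grad u(y) = \grad\varphi(y-x)$), and proves a \emph{universal} bound $|Dm(y)| \le C$ via
\[
|Dm(y)| \;\le\; \bigl\| (D^2\varphi(y-x))^{-1} \bigr\|\;\bigl\| D^2\varphi(y-x) - D^2u(y) \bigr\| \;\le\; C,
\]
where the $|y-x|^{3/2}$ factor from $(D^2\varphi)^{-1}$ cancels the $|y-x|^{-3/2}$ factor from $\|D^2\varphi - D^2u\|$ (the latter controlled by the equation and the touching inequality $D^2u \ge D^2\varphi$). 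This cancellation — which works precisely because of the exponent $1/2$ in the cusp, and would fail e.g.\ for $\varphi=-|x|$ — is what makes the estimate hold near the vertex. Then the area formula gives $|U| = \int_{\mathcal T} |\det Dm| \le C^d|\mathcal T|$, hence $|\mathcal T|\ge c|U|$, contradicting $\mathcal T\subset\{u\le M-1\}$ of small measure. Note also that the paper maps contact points to vertices, not vertices to contact points as in your step (4); the latter map need not be single-valued. Without the Jacobian cancellation, the vague ``Vitali-type overlap'' step in your write-up does not produce the needed lower bound on $|\mathcal T|$, so the contradiction is not reached.
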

\begin{remark}
Amusingly enough, the values of $M$ and $\eps_0$ in the lemma above
are absolute constants. They do not depend on $\lambda$, $\Lambda$ or
the dimension. But the constant $\delta$ does.
\end{remark}

\subsection{The proof for classical solutions}

The proof of Lemma \ref{l:measure-estimate} is easier to understand
when $u$ is a smooth function. We will first describe the proof in
this case. In the next subsection we will explain why the result holds
for lower semi-continuous viscosity solutions in general.

\begin{prop}\label{p:sc-smooth}
Lemma \ref{l:measure-estimate} holds if $u$ is a $C^2$ super-solution.
\end{prop}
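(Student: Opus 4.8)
The plan is to use the sliding-cusps method announced in the introduction. Fix the cusp profile $\varphi(x) = -|x|^{1/2}$ (suitably scaled) and, for each vertex $y$ in a small ball near the origin, consider the family of vertically translated cusps $x \mapsto \varphi(x-y) + t$ with $t$ increasing from $-\infty$. Since $u \geq 0$ is lower semi-continuous and the cusp tends to $-\infty$ at its vertex, there is a first time $t(y)$ at which the graph of the cusp touches the graph of $u$ from below, at some contact point $x^* = x^*(y) \in B_1$. The first step is to choose all the scaling constants (the size of the ball of vertices, the opening of the cusp, and the constant $M$) so that, using $\|u\|_{L^\infty}$ is \emph{not} assumed here but $u$ is large on most of $B_1$ together with $u \geq 0$ everywhere, the contact point $x^*(y)$ is forced to lie in the ``bad'' set $\{u \leq M\} \cap B_1$ — i.e. in the small-measure set where we have no information beyond $u \geq 0$. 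Roughly: if the contact happened where $u$ is large, the cusp (which is bounded below by a fixed negative constant away from its vertex) could be pushed up further without touching, a contradiction, provided $M$ is chosen large enough relative to the cusp's depth.

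The second step is the measure estimate at the contact points. Since $u$ is $C^2$ and the cusp touches from below at $x^*(y)$, we have the usual first- and second-order contact inequalities: $\grad u(x^*) = \grad \varphi(x^* - y)$ and $D^2 u(x^*) \geq D^2\varphi(x^* - y)$. The key computation is that $|\grad \varphi(z)| = \tfrac12 |z|^{-1/2}$, which is \emph{large} when $z$ is small — so by shrinking the ball of vertices (and hence forcing $|x^* - y|$ small, since the contact point and vertex must be close for the touching to occur before the cusp escapes to $-\infty$ over the region where $u$ is large) we guarantee $|\grad u(x^*)| \geq \gamma$. This is exactly the point where the hypothesis $\gamma \leq \eps_0$ enters: the cusp's gradient blows up like $|z|^{-1/2}$, so a cusp of fixed opening automatically has gradient above any fixed small threshold in a fixed neighborhood of its vertex. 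Therefore the equation $M^-(D^2 u, \grad u) \leq 1$ is active at $x^*$, which combined with the contact inequality on $D^2 u$ gives a bound from above on $\lambda\,\tr D^2\varphi(x^*-y)^+ - \Lambda\,\tr D^2\varphi(x^*-y)^- - \Lambda|\grad\varphi(x^*-y)|$; computing the eigenvalues of $D^2\varphi$ (one very negative radial eigenvalue of order $-|z|^{-3/2}$ and tangential eigenvalues of order $-|z|^{-3/2}$ as well, all negative, so $D^2\varphi$ is negative definite) shows this quantity is actually very negative, \emph{hence the inequality is easily satisfied and gives no contradiction per point} — rather, the information we extract is a pointwise Jacobian/nondegeneracy bound on the map $y \mapsto \grad\varphi(x^*(y) - y) =: \Phi(y)$, which we then use with the area formula.

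The third step packages this as a measure estimate. The map $y \mapsto x^*(y)$ need not be nice, but the \emph{gradient map} $y \mapsto \grad u(x^*(y))$ coincides with $y \mapsto \grad\varphi(x^*(y)-y)$, and a standard argument (as in the ABP/Cabr\'e--Savin framework) shows that the set of slopes $\{\grad\varphi(x^*(y)-y) : y \in B_\rho\}$ covers a set of definite measure in $\R^d$ — indeed it contains, for each admissible vertex $y$, a ball of radii controlled by the geometry of the cusp — while simultaneously the contact points $x^*(y)$ all lie in $\{u \leq M\}\cap B_1$ and the second-order contact condition $D^2 u(x^*) \geq D^2\varphi(x^*-y)$ bounds the local distortion of the gradient map $x \mapsto \grad u(x)$ restricted to the contact set from above (the positive-part eigenvalues of $D^2 u$ are controlled using $M^-(D^2 u,\grad u)\le 1$ and the already-established negativity of $D^2\varphi$). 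Comparing the measure in slope-space with the measure in $x$-space via the area formula for $\grad u$ then yields $|\{u \leq M\}\cap B_1| \geq c > 0$ for a dimensional-and-ellipticity constant $c$; contrapositively, choosing $\delta < c/|B_1|$ forces $\{u \leq M\}$ to be small enough that no admissible configuration exists unless the conclusion $u > 1$ on $B_{1/4}$ holds (the $1/4$ comes from requiring the contact points of the relevant cusps to be able to reach into $B_{1/4}$). I expect the main obstacle to be the bookkeeping of the three scaling parameters — the vertex-ball radius, the cusp opening, and $M$ — so that simultaneously (a) touching occurs inside $B_1$, (b) $|\grad\varphi(x^*-y)| \geq \gamma$ at every contact point, which needs $\gamma \le \eps_0$, and (c) the slope set has measure exceeding the distortion bound times $\delta|B_1|$; getting a \emph{clean} dependence making $M$ and $\eps_0$ absolute constants (as the remark promises) while only $\delta$ absorbs $\lambda,\Lambda,d$ is the delicate part.
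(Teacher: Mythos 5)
Your overall plan — slide cusps $\varphi(z) = -|z|^{1/2}$ from below, land the contact points in the small set $\{u \le M\}$, read off the first- and second-order touching conditions, and close with an area-formula measure estimate — is the same one the paper follows. But as written it has a genuine gap in the step that you flag as ``Step 1.'' You never invoke the contradiction hypothesis: the proof must begin by assuming $u(x_0) \le 1$ for some $x_0 \in B_{1/4}$, and it is precisely this $x_0$ that caps the touching level. Since $x_0$ is a competitor for the minimum of $z \mapsto u(z) - \varphi(z-x)$, the sliding height $q$ satisfies $q \le 1 - \varphi(x_0 - x) \le 1 + 5\sqrt{2}$, and so $u(y) = \varphi(y-x) + q \le q < M := 2 + 5\sqrt 2$. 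Your proposed justification — that ``the cusp could be pushed up further without touching'' — is not a valid argument, and your stated inputs ($u \ge 0$ and $u$ large on most of $B_1$) do not suffice: if $u > M$ everywhere, both hold and every contact occurs at a high level. Relatedly, $\varphi(z) = -|z|^{1/2}$ has a \emph{peak} at the vertex ($\varphi(0)=0$), not a singularity going to $-\infty$; because of this you must also restrict the vertices $x$ to the large set $U = \{u > M\} \cap B_{1/4}$, which guarantees the contact is not at the vertex itself (where $\grad\varphi$ is undefined and the touching gives no usable equation).

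A smaller but worth-noting point is that the paper's measure comparison is more direct than the ``slope set covers definite measure'' heuristic you sketch. One defines the map $m: \mathcal T \to U$ from contact points to vertices, determined implicitly by $\grad u(y) = \grad\varphi(y - m(y))$; differentiating gives $D^2 u(y) = D^2\varphi(y - m(y))(I - Dm(y))$. The equation is active at $y$ because $|\grad\varphi(y-x)| \ge 2\sqrt 5$ for $|y-x| \le 5/4$ (so $\eps_0 = 2\sqrt 5$ works), and together with $D^2 u(y) \ge D^2\varphi(y-x)$ it yields $|D^2 u(y)| \le C(1 + |y-x|^{-3/2})$. Since $\|D^2\varphi(y-x)^{-1}\| \sim |y-x|^{3/2}$, the powers of $|y-x|$ cancel and $|Dm| \le C$ universally; the area formula then gives $|U| \le C^d|\mathcal T| \le C^d \delta |B_1|$, contradicting $|U| \ge |B_{1/4}| - \delta|B_1|$. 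This cancellation of $|y-x|^{\pm 3/2}$ is the reason the exponent $1/2$ in the cusp works and the reason a profile like $-|x|$ would fail; it is worth making it explicit rather than appealing to a general ABP-style covering.
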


\begin{proof}
We do the proof by contradiction. Assume that for all $\eps_0$,
$\delta$, $M$, we can find $u$ as above and such that $u(x_0) \leq 1$
for some point $x_0 \in B_{1/4}$.

Consider $U = \{ u> M\}\cap B_{1/4}$. For every $x \in U$, let $y \in
\overline {B_1}$ be a point where the minimum of $u(y) +
10|y-x|^{1/2}$ is achieved.

On one hand, since $u \geq 0$ in $B_1$ and $x \in U \subset B_{1/4}$,
then $u(z) + 10|z-x|^{1/2} > 5\sqrt{3}$ if $z \in \partial B_1$. On
the other hand, $u(x_0) + 10|x_0-x|^{1/2} \leq 1 + 5\sqrt{2} < 5\sqrt
3$. Therefore, the minimum will never be achieved on the boundary and
$y \in B_1$. Moreover, we obtain that $u(y) + 10|y-x|^{1/2} \leq 1 +
5\sqrt{2}$ and in particular $u(y) \leq 1 + 5\sqrt{2}$.

We choose the constant $M$ in this lemma to be $M := 2 + 5\sqrt{2}$
(note that $M$ does not depend on anything!). In this way, we know
that $u(y) < M$. In particular $x \neq y$ and $|z-x|^{1/2}$ is
differentiable at $z=x$. 

Note that for one value of $x$, there could be more than one point $y$
where the minimum is achieved. However, the value of $y$ determines
$x$ completely since we must have
\[
\grad u(y) = 5 (x-y) |y-x|^{-3/2}.
\]
For convenience, let us call $\varphi(z) = -10|z|^{1/2}$. We thus have
\begin{eqnarray} \label{e:equalgrad}
\grad u(y) = \grad \varphi(y-x), \\
 \label{e:ineqhessian}
D^2 u(y) \geq D^2 \varphi(y-x).
\end{eqnarray}

The expressions \eqref{e:equalgrad} and \eqref{e:ineqhessian},
together with the equation $M^-(D^2 u, Du) \leq 1$, implies that
\begin{equation} \label{e:boundedhessian}
|D^2 u(y)| \leq C\left( 1 + |D^2 \varphi(y-x)| + |\grad \varphi(y-x)| \right)
\end{equation}
provided that $\eps_0 \le \min_{B_{5/4}} |\grad \varphi| = 2 \sqrt{5}$. In the previous inequality,
$C$ depends on ellipticity constants and dimension. 

Since for each value of $y$, there is only one value of $x$, we can
define a map $m(y) := x$. Let us call $\mathcal T$ the domain of the
map $m$. That is $\mathcal T$ is the set of values that $y$ takes as
$x \in U$. We know that $\mathcal T \subset \{y:u(y) < M\}$ and that
$m(\mathcal T) = U$.

Replacing $x=m(y)$ in \eqref{e:equalgrad} and applying the chain rule,
we obtain
\[ D^2 u(y) = D^2 \varphi(y-m(y)) \ \left( I - Dm(y) \right).\]

Solving for $Dm$ and using the estimate \eqref{e:boundedhessian}, we
get (in terms of Frobenius norms)
\[ | Dm(y)| \leq 1 + C \frac{  1 + |D^2 \varphi(y-x)| + |\varphi(y-x)| }{| D^2 \varphi(y-x)|} \leq C.\]
Therefore
\[ (1-4^d \delta)|B_{1/4}| \leq |U| = \int_{\mathcal T} |\det Dm(y)| \dd y \leq C |\mathcal T|.\]

Since, $\mathcal T \subset \{y : u(y) < M\}$, from our assumptions we
have that $|\mathcal T| \leq \delta |B_1|$. This is a contradicion if
$\delta$ is small enough (depending on ellipticity constants and
dimension). The proof is now complete.
\end{proof}

\subsection{Formalizing the proof for viscosity solutions}

In this subsection, we explain how to derive
Lemma~\ref{l:measure-estimate} for merely lower semi-continuous
viscosity super-solutions. In order to do so, we use classical
inf-convolution techniques to reduce to the case of semi-concave
viscosity super-solutions (Proposition~\ref{p:sc-enough}).  We then
prove Lemma~\ref{l:measure-estimate} in the semi-concave case
(Proposition~\ref{p:sc} below).

\begin{prop}\label{p:sc-enough}
Assume Lemma~\ref{l:measure-estimate} is proved for semi-concave
super-solutions. Then the lemma is also true for a lower
semi-continuous super-solution $u$.
\end{prop}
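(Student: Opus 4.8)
The plan is to reduce the lower semi-continuous case to the semi-concave case by replacing $u$ with its inf-convolution $u_\epsilon(x) = \inf_{y \in B_1}\bigl( u(y) + \tfrac{1}{2\epsilon}|x-y|^2 \bigr)$, defined on a slightly smaller ball. Inf-convolutions are standard in the viscosity-solution literature, so the bulk of the argument is verifying that $u_\epsilon$ inherits the three hypotheses of Lemma~\ref{l:measure-estimate} (up to harmless adjustments of the constants $M$ and $\delta$ and the domain), applying the semi-concave version to $u_\epsilon$, and then passing to the limit $\epsilon \to 0$.

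First I would recall the basic properties of $u_\epsilon$: it is semi-concave (being an infimum of a family of functions with uniformly bounded Hessians, namely $\tfrac{1}{\epsilon} I$ after subtracting the smooth term), it is bounded below by $\inf u \ge 0$, it increases monotonically to $u$ as $\epsilon \to 0$, and $u_\epsilon \le u$. Moreover, for each $x$ the infimum is attained at some $y_\epsilon(x)$ with $|x - y_\epsilon(x)| \le C\sqrt{\epsilon \,\osc u}$, so on any fixed compact subset of $B_1$ the map $x \mapsto y_\epsilon(x)$ stays inside $B_1$ once $\epsilon$ is small. The key viscosity fact is that if $\varphi$ touches $u_\epsilon$ from below at $x$, then $\varphi(\cdot + (x - y_\epsilon(x)))$ (plus a constant) touches $u$ from below at $y_\epsilon(x)$, with the same Hessian and the same gradient; hence $M^-(D^2 u_\epsilon, \grad u_\epsilon) \le M^-(D^2 u, \grad u) \le 1$ holds in viscosity sense on the relevant subdomain. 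The superlevel-set hypothesis transfers because $\{u_\epsilon > M'\} \supset \{ y_\epsilon(x) : u(y_\epsilon(x)) > M'\}$ up to a set of small measure controlled by the Jacobian of $x \mapsto y_\epsilon(x)$, which is close to the identity; alternatively, since $u_\epsilon \nearrow u$, for any fixed $M' < M$ we have $|\{u_\epsilon > M'\} \cap B_{1-\eta}| \to |\{u > M'\} \cap B_{1-\eta}|$, so the $(1-\delta)$-density hypothesis survives with a slightly worse $\delta$ on a slightly smaller ball.

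Having arranged hypotheses for $u_\epsilon$ (after a trivial rescaling to restore the unit ball), the semi-concave version of the lemma, i.e.\ Proposition~\ref{p:sc}, gives $u_\epsilon > 1$ — or $> 1 - o(1)$ after tracking the small perturbations — on $B_{1/4}$ (or $B_{1/4 - \eta}$). Letting $\epsilon \to 0$ and $\eta \to 0$ and using $u_\epsilon \le u$ then yields $u \ge 1$ on $B_{1/4}$; to get the strict inequality $u > 1$ claimed in the lemma, I would apply the already-established conclusion $u \ge 1$ together with a small margin (proving $u_\epsilon > 1 + c$ on $B_{1/4}$ for the inf-convolution, which is possible because $M$ is chosen with room to spare), or simply restate the lemma with $\ge$, which is what the later sections actually use.

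The main obstacle is bookkeeping rather than conceptual: one must choose the constants $M$, $\delta$, $\eps_0$ in the semi-concave statement with enough slack that the losses incurred in the reduction — the shrinking of the domain from $B_1$ to $B_{1-\eta}$, the degradation of the density constant $\delta$, and the possible loss in the threshold $M$ — can all be absorbed, and then argue that the final constants for the lower semi-continuous case are still absolute (for $M$, $\eps_0$) and still depend only on $\lambda, \Lambda, d$ (for $\delta$). A secondary technical point requiring care is that $u$ is only lower semi-continuous, so one should check that $u_\epsilon$ is actually continuous (indeed locally Lipschitz) and that the attainment of the infimum and the touching-function transfer genuinely hold at this regularity; this is classical but deserves an explicit sentence. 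I expect no difficulty beyond these.
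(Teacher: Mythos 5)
Your proposal follows essentially the same route as the paper: replace $u$ by its inf-convolution, transfer the three hypotheses (nonnegativity, the supersolution property, the density of $\{u>M\}$) to the inf-convolution on a slightly smaller ball, apply the semi-concave case, and pass to the limit. The paper also invokes that $\{v_\eps > M\}$ increases to $\{u > M\}$ as $\eps \to 0$ to transfer the measure hypothesis, exactly as in your second (and better) suggestion; your Jacobian alternative is unnecessary.

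There is, however, one genuine gap. You bound $|x - y_\eps(x)| \le C\sqrt{\eps\,\osc u}$ and conclude that the contact point $y_\eps(x)$ stays in $B_1$ uniformly for $x$ in a compact subset once $\eps$ is small. This step needs $u$ to be bounded above, but the hypotheses of Lemma~\ref{l:measure-estimate} only give $u \ge 0$ and lower semi-continuity; a lower semi-continuous supersolution of $M^-(D^2u,\grad u) \le 1$ need not be locally bounded above, and the proposition is precisely a step toward the $L^\eps$ estimate that would eventually yield such a bound, so it cannot be assumed. Without an a priori upper bound, the displacement $|x - y_\eps(x)|$ is not controlled uniformly in $x$, and one cannot assert that $u_\eps$ is a supersolution on any fixed ball $B_{1-\eta}$. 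The paper closes this gap with one preliminary step that you should add: replace $u$ first by the truncation $v = \min(u, 2M)$, which is still a supersolution (a minimum of two supersolutions), satisfies $0 \le v \le 2M$, and has the same sets $\{v>M\} = \{u>M\}$ and $\{v>1\} = \{u>1\}$, so the hypotheses and conclusion of the lemma are unaffected. Then the inf-convolution of $v$ satisfies $|x - y_\eps(x)| \le 2\sqrt{2M\eps}$ uniformly, and the rest of your argument goes through. Incidentally, your worry about strict versus non-strict inequality in the conclusion is a non-issue: the semi-concave lemma gives $v_\eps > 1$ on $B_{(1-\eta)/4}$, and since $u \ge v \ge v_\eps$ pointwise, one gets $u > 1$ there with no loss; letting $\eta \to 0$ covers $B_{1/4}$.
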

\begin{proof}
Let us consider a merely lower semi-continuous super-solution $u$
defined in $B_1$.

Let $v := \min (u,2M)$ where $M$ is given by
Lemma~\ref{l:measure-estimate} for semi-concave solutions.  Note that
$v$ is still a super-solution because it is the minimum between two
super-solutions. We have $0 \leq v \leq 2M$.

Consider the inf-convolution of $v$ of parameter $\eps>0$:
\[ v_\eps(x) = \inf_{y \in B_1} (v(y) + (2\eps)^{-1}|y-x|^2).  \]
It is classical to prove that $v_\eps$ is still a super-solution at $x
\in B_{1-\delta}$ (for $\delta>0$) of the same equation provided that
we can show that $y_x \notin B_1$.

Consider $y_x \in \overline{B_1}$ such that 
\[ v_\eps (x) = v(y_x) + (2\eps)^{-1} |y_x-x|^2 \leq v (x).\]
Then 
\[ |y_x-x| \leq 2\sqrt{\|v\|_\infty \eps } = 2\sqrt{2M\eps}.\]
Thus, for any $\delta>0$, $v_\eps$ is a super-solution in
$B_{1-\delta}$ provided that $2\sqrt{2M\eps} < \delta$.

Note that $v_\eps$ is semi-concave and 
\[ D^2 v_\eps \leq \eps^{-1} I.\]

Since $v$ is lower semicontinuous, it is classical to show that
$v_\eps$ converges to $v$ in the half relaxed sense (which is exactly
the same as $\Gamma$-convergence). Moreover,
\[ \{u > M\} = \bigcup_{\eps > 0} \{v_\eps > M\}.\]
Note that as $\eps \to 0$, the sets $\{v_\eps > M\}$ is an increasing
nested collection, therefore
\[ |\{ u > M \}| = \lim_{\eps \to 0} |\{v_\eps > M\}|.\]

For $\eps$ sufficiently small, we can apply
Lemma~\ref{l:measure-estimate} (appropriately scaled to the ball
$B_{1-\delta}$ instead of $B_1$) and obtain that $v_\eps \geq 1 \text{
  in } B_{(1-\delta)/4}$. Since $u \geq v_\eps$ and $\delta$ is
arbitrarly small, the proof is finished.
\end{proof}

\begin{prop}\label{p:sc}
Lemma \ref{l:measure-estimate} holds if $u$ is a semi-concave
viscosity super-solution.
\end{prop}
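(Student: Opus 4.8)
The plan is to adapt the proof of Proposition~\ref{p:sc-smooth} to the semi-concave case, where the only missing ingredient is the validity of the change of variables $|U| = \int_{\mathcal T} |\det Dm(y)|\,dy$ and the pointwise Hessian bound \eqref{e:boundedhessian}, both of which used the $C^2$ regularity of $u$. Since $u$ is semi-concave, it is twice differentiable almost everywhere (by Alexandrov's theorem), so $D^2 u(y)$ exists for a.e.\ $y$; moreover the semi-concavity gives a one-sided Hessian bound $D^2 u \le C I$ for some constant depending on the semi-concavity parameter. First I would repeat the construction: by contradiction assume $u(x_0)\le 1$ for some $x_0 \in B_{1/4}$, set $U = \{u>M\}\cap B_{1/4}$, and for each $x \in U$ let $y = m(x)$ (really, the $y$'s realizing the minimum of $u(z)+10|z-x|^{1/2}$; I'd need to check $m$ is well-defined, or at least single-valued enough to run a coarea-type bound). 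The touching inequalities \eqref{e:equalgrad}--\eqref{e:ineqhessian} hold in the viscosity sense: the first because $\varphi(\cdot - x)$ touches $u$ from below at $y$ so $\grad u(y)$ exists and equals $\grad\varphi(y-x)$ wherever $u$ is differentiable, and the second holds at points of twice-differentiability of $u$ since a smooth function touching from below forces $D^2 u(y) \ge D^2\varphi(y-x)$.

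The key step is to make the measure-theoretic argument rigorous without $C^2$ regularity of $u$. Rather than invoking a Jacobian formula for a map that is only defined a.e., I would argue as in the classical semi-concave barrier arguments (Cabré, Savin): show that the map $x \mapsto y$, or better its inverse "$y \mapsto x = m(y)$", is Lipschitz on the set $\mathcal{T}$ of contact points $y$ at which $u$ is twice differentiable. Indeed, from $\grad u(y) = \grad\varphi(y - m(y))$ we get, by inverting $\grad \varphi$ (which is a diffeomorphism away from the origin, and $|y - x| \le 1/400$ stays bounded below... actually bounded above, and away from $0$ one checks $\grad\varphi$ is bi-Lipschitz on the relevant annulus), that $x = m(y)$ is controlled by $\grad u(y)$; and $\grad u$ restricted to the contact set is a Lipschitz-type map because at contact points $u$ lies above $\varphi(\cdot - x)$, which pins down the modulus of continuity of $\grad u$ along $\mathcal{T}$. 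Combined with the Hessian bound \eqref{e:boundedhessian} — valid at a.e.\ contact point from the viscosity inequality $M^-(D^2u,\grad u)\le 1$ together with $D^2u(y)\ge D^2\varphi(y-x)$ and the one-sided semi-concavity bound — one obtains that $x \mapsto y$ does not expand measure by more than a constant factor. Hence $|U| = |\{y = m(x): x\in U\}| \le \text{(area formula / Lipschitz bound)} \le C|\mathcal T| \le C\delta|B_1|$, forcing $(1 - 4^d\delta)|B_{1/4}| \le C\delta|B_1|$, a contradiction for $\delta$ small.

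I expect the main obstacle to be exactly this step: establishing the quantitative (Lipschitz / bounded-Jacobian) control on the contact map using only a.e.\ twice-differentiability and the viscosity inequality, in place of the clean chain-rule computation $D^2 u(y) = D^2\varphi(y-m(y))(I - Dm(y))$ that required $C^2$. The natural way to handle it is to work with the inverse map and use that the set of contact points $\mathcal{T}$ can be covered, up to a null set, by countably many pieces on which the relevant maps are Lipschitz, then apply the area formula on each piece. One must also be slightly careful that $\mathcal{T} \subset \{u < M\}$ still holds (this part of the argument is purely about sizes of $u$ and $\varphi$ and carries over verbatim), and that the semi-concavity constant of $u$ — which in the application comes from the inf-convolution parameter via $D^2 v_\eps \le \eps^{-1} I$ in Proposition~\ref{p:sc-enough} — enters the final estimate only through the constant $C$ and not through $\delta$, so that the $\eps_0$, $\delta$, $M$ produced are genuinely independent of $\eps$; otherwise the reduction in Proposition~\ref{p:sc-enough} would break. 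The remaining pieces — the choice $M = 2 + 5\sqrt2$, the boundary exclusion of the minimum point, the requirement $\eps_0 \le 2\sqrt5$ — are identical to the smooth case and need no change.
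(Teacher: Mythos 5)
Your proposal follows the same line of argument as the paper: reduce to a Jacobian estimate for the contact map $m\colon \mathcal T\to U$, defined implicitly by $\grad u(y)=\grad\varphi(y-m(y))$, justify the area formula by showing $m$ is Lipschitz, and get a \emph{universal} bound $|Dm|\le C$ almost everywhere from the combination of Alexandrov's theorem, the touching inequality $D^2u(y)\ge D^2\varphi(y-x)$, and the equation $M^-(D^2u,\grad u)\le 1$. You also correctly flag the crucial bookkeeping point that the semi-concavity constant (i.e.\ the inf-convolution parameter) may enter the Lipschitz constant of $m$ --- which only justifies the change of variables --- but must \emph{not} enter the final Jacobian bound, since $\delta$, $M$, $\eps_0$ have to be independent of $\eps$ for Proposition~\ref{p:sc-enough} to close the loop. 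One small simplification compared to your ``countably many Lipschitz pieces'' fallback: the paper proves directly (Step~3) that $\grad u$ is Lipschitz on \emph{all} of $\mathcal T$, not just a.e.\ or on pieces. The argument is a two-sided squeeze: touching from below by $\varphi(\cdot-x_1)$ and $\varphi(\cdot-x_2)$ at nearby contact points $y_1,y_2$ gives a one-sided bound on $u(z)$ from below, semi-concavity gives the matching upper bound, and subtracting yields $|\grad u(y_1)-\grad u(y_2)|\le C(1+\eps^{-3/2})|y_1-y_2|$, with $\eps$ the lower bound on $|y-x|$ coming from $u(y)\le M-1<M<u(x)$ and the modulus of continuity of $u$. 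Since $|y-x|\ge\eps$ also makes $(\grad\varphi)^{-1}$ smooth with Lipschitz constant depending only on $\eps$, $m$ is globally Lipschitz on $\mathcal T$ and the area formula applies in one shot. Everything else in your outline (the choice $M=2+5\sqrt 2$, exclusion of boundary minima, $\eps_0\le 2\sqrt 5$, the inclusion $\mathcal T\subset\{u\le M-1\}$) matches the paper.
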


\begin{proof}
The main idea of the proof was already explained in Lemma \ref{l:measure-estimate} for $u \in C^2$. Here we need to work harder in order to deal with the technical difficulty that we do not assume the function $u$ to be second differentiable. Yet, the proof follows essentially the same lines.

In order to organize the proof. We list the main steps in bold letters.

We assume that we have a semi-concave function $u$, which satisfies
\[ u \geq 0 \qquad \text{ and } \qquad M^-(D^2 u, \grad u) \leq 1 \text{ in } B_1.\]

We assume also that
\begin{equation} \label{e:contradicting-hypothesis}  \min_{B_{1/4}} u \leq 1 \quad \text{ and } \quad |\{ u > M \} \cap B_1| >
(1-\delta)|B_1|
\end{equation} 
in order to obtain a contradiction.

\paragraph{Step 0. Analyzing the semi-concavity assumption.} \hfill

We assume only that $D^2 u \leq C_0$ in the
sense that $u(x) - C_0 |x|^2/2$ is concave. This means that for every point $x_0 \in B_1$ there exists a vector $p \in \R^d$ (a vector in the super-differential), which is $p=\grad u(x_0)$ in case $u$ is differentiable at $x_0$, so that
\begin{equation} \label{e:semi-concavity}  u(x) \leq u(x_0) + p \cdot (x-x_0) + \frac{C_0}2 |x-x_0|^2.
\end{equation}
for all $x \in B_1$.

We finally recall that by Alexandrov theorem, the semi-concave function $u$ is pointwise second differentiable almost everywhere. That means that there exists a set of measure zero $E \subset B_1$, so that at every point $x \in B_1 \setminus E$, the function $u$ is differentiable and there exists a symmetric matrix $D^2u(x)$ such that
\[ u(y) = u(x) + (y-x) \cdot \grad u(x) + \frac 12 \langle D^2u(x) \, (y-x), (y-x) \rangle + o(|x-y|^2).\]
Moreover, we also have \cite{jbhu}
\[ \grad u(y) = \grad u(x) + D^2u(x) \, (y-x) + o(|x-y|),\]
where by $\grad u(y)$ we mean any vector in the super-differential of $u$ at $y$.

\paragraph{Step 1. Touching $u$ with cusps from below.} \hfill

As in the proof for $u \in C^2$, we define $\varphi(x) =
-10|x|^{1/2}$ and $M = 2+5\sqrt 2$. 

Consider the open set $U = \{u > M\} \cap B_{1/4}$. From our assumption \eqref{e:contradicting-hypothesis}, we have that $|U| > |B_{1/4}| - \delta |B_1|$, which is a significant measure for $\delta$ small. We can assume for example that $|U| \geq |B_{1/8}|$, which is a constant which depends on the dimension $d$ only.

For every $x \in U$, we look for the point $y \in B_1$ which realizes the following minimum.
\begin{equation} \label{e:def-of-y}  u(y) - \varphi(y-x) = \min\{u(z) - \varphi(z-x) : z \in \overline{B_1} \}.
\end{equation}
Equivalently, if we let $q(x)= \min_{z \in \overline{B_1}} (u (z) - \varphi(z-x))$, we have
\begin{equation} \label{e:alt-def-of-y}  \begin{aligned}
u(y) &= \varphi(y-x)+q(x), \\
u(z) &\geq \varphi(z-x)+q(x) \quad \forall z \in B_1.
\end{aligned}
\end{equation}

Since $\min_{B_{1/4}} u \leq 1$, we observe that $q(x) \leq 1 - \min_{B_{1/2}} \varphi = 1 + 5 \sqrt 2$. Consequently, $y \notin \partial B_1$, since for those values of $y$ we would have $\varphi(y-x) + q(x) < 0 \leq u(y)$. Moreover, $u(y) = \varphi(y-x) + q(x) \leq 1+5 \sqrt 2 = M-1$. In particular $y \notin U$ and $y \neq x$.

Since $u$ is a semi-concave function, at the point $y$ where it is touched by below by the smooth function $\varphi$, it must be differentiable and $\grad u(y) = \grad \varphi(y-x)$. Further analysis on the second derivatives of $u$ at $y$ is postponed to later in the proof.

\paragraph{Step 2. Defining the contact set  $\mathcal T$.} \hfill

We define $\mathcal T$ as the set of contact points $y \in \overline{B}_1$ for all values of $x \in U$. In other words, for any $y \in \mathcal T$,
there exists $x_y \in U$  such that \eqref{e:def-of-y} holds. This definition is just a rephrasing of the definition of $\mathcal T$ given in the proof of Lemma~\ref{l:measure-estimate}.

As it was mentioned above, we have $u(y) \leq M-1$ for all $y \in \mathcal T$. Thus 
\[ \mathcal T \subset B_1 \cap \{ u \le M-1\}.\]

\paragraph{Step 3. $\grad u$ is Lipschitz on $\mathcal T$.} \hfill

Since $u(x) > M$ for all $x \in U$ and $u(y) \leq M-1$ for all $y \in \mathcal T$, then we must have $|y-x| > \eps$ for some $\eps>0$ depending on the modulus of continuity of $u$. The function $\varphi$ has a singularity at the origin. This constant $\eps>0$ tells us that we are evaluating $\varphi(y-x)$ away from this singularity where $\varphi$ is $C^2$ and $|D^2 \varphi| < C \eps^{-3/2}$.

Let $x_1, y_1$ and $x_2, y_2$ be two pairs of corresponding points
(they are two pairs of $x,y$ points satisfying
\eqref{e:def-of-y}). Let $r = 2|y_1-y_2|$. For any $z \in B_r(y_1)$,
we use the bound of $D^2 \varphi$ above and \eqref{e:alt-def-of-y}, to
obtain
\begin{align*}
u(z) &\geq \varphi(z-x_1) \\
& \geq \varphi(y_1-x_1) + \grad \varphi(y_1-x_1) \cdot (z-y_1) - C \eps^{-3/2} r^2, \\
& = u(y_1) + \grad u(y_1) \cdot (z-y_1) - C \eps^{-3/2} r^2.
\end{align*}
In particular, for $z = y_2$, 
\[ u(y_2) \geq u(y_1) + \grad u(y_1) \cdot (y_2-y_1) - C \eps^{-3/2} r^2.\]
Exchanging the roles of $y_1$ and $y_2$, we also get
\[u(y_1) \geq u(y_2) + \grad u(y_2) \cdot (y_1-y_2) - C \eps^{-3/2} r^2.\]

Replacing this bound for the value of $u(y_1)$ in the first inequality, we get
\[ u(z) \geq u(y_2) + \grad u(y_2) \cdot (y_1-y_2) + \grad u(y_1) \cdot (z-y_1) - C \eps^{-3/2} r^2.\]

Moreover, from \eqref{e:semi-concavity}, we also have
\[ u(z) \leq u(y_2) + \grad u(y_2) \cdot (z-y_2) + Cr^2.\]

Subtracting the two inequalities above, we obtain
\[ (\grad u(y_1) - \grad u(y_2)) \cdot (z-y_1) \leq C (\eps^{-3/2}+1) r^2.\]

Since $z$ is an arbitrary point in $B_r(y_1)$, we conclude that
$|\grad u(y_1) - \grad u(y_2)| \leq C (1+\eps^{-3/2}) r$. That is, we
proved that $\grad u$ is Lipschitz on $\mathcal T$. The estimate of
the Lipschitz norm $[\grad u]_{Lip(\mathcal T)}$ that we obtained
depends on $\eps$ and consequently on the modulus of continuity of
$u$. It is not a universal constant.

\paragraph{Step 4. The map $m: \mathcal T  \to U$.} \hfill

As it was pointed out above, $u$ must be differentiable at the point
$y$ and $\grad u(y) = \grad \varphi(y-x)$. Note that the value of
$\nabla \varphi(y-x) = -5 |y-x|^{-3/2} (y-x)$ determines
uniquely the value of $(y-x)$. In particular, for every $y \in
\mathcal T$, there is a unique $x \in U$ so that \eqref{e:def-of-y}
holds, and that is the point $x$ such that $\grad u(y) = \grad
\varphi(y-x)$. Let us define $m : \mathcal T \to U$ as the function
that maps $y$ into $x$. That is, from the implicit definition
\begin{equation} \label{e:implicit}  \grad u(y) = \grad \varphi(y-m(y)),
\end{equation}
we deduce
\[ m(y) = y - (\grad \varphi)^{-1} \grad u(y),\]
where by $(\grad \varphi)^{-1}$ we mean the inverse of $\grad \varphi$
as a function from $\R^d \to \R^d$.

We showed already that $\grad u$ is Lipschitz on $\mathcal
T$. Clearly, the map $(\grad \varphi)^{-1}$ which maps $\grad
\varphi(y-x)$ into $y-x$ has a singularity for large gradients, or
equivalently where $y-x$ is close to the origin. As it was pointed out
above, we always have $|y-x| > \eps$ for some $\eps >0$ depending on
the modulus of continuity of $u$. So at least we know that on the set
$\mathcal T$, $(\grad \varphi)^{-1}$ will be a Lipschitz map (in fact
smooth) with a Lipschitz constant depending on $\eps$ (and
consequently on the modulus of continuity of $u$). This implies that
$m$ is Lipschitz. Therefore, $m$ is differentiable almost everywhere
and the classical formula holds
\begin{equation} \label{e:jacobian}  |U| = \int_{\mathcal T} |\det Dm(y)| \dd y.
\end{equation}

\paragraph{Step 5. A universal estimate on $Dm$.} \hfill

So far we have only estimated $|Dm(y)|$ in terms of $\eps$. This was
only a technical step to justify writing the expression
\eqref{e:jacobian}. Now we will obtain an estimate for $|Dm(y)|$
depending only on the universal constants $\lambda$, $\Lambda$ and
$d$.

As we mentioned in step 0, the function $u$ is pointwise second
differentiable except for a set $E$ of measure zero. In particular,
for all $y \in \mathcal T \setminus E$, we have $M^-(D^2 u(y), \grad
u(y)) \leq 1$ in the classical sense and we can do the following
computations below.

We take $\gamma$ sufficiently small in order to ensure that $|\grad
u(y)| = |\grad \varphi(y-x)| > \gamma$ for all $y \in B_1$ and $x \in
B_{1/4}$. Thus, the equation $M^-(D^2u(y), Du(y)) \leq 1$ is
meaningful and we obtain
\begin{equation} \label{e:the-equation} 
\lambda \tr (D^2 u(y))^+ - \Lambda \tr (D^2 u(y))^- = M^-(D^2 u(y),
\grad u(y)) + \Lambda |\nabla u(y)| \leq C(1 + |y-x|^{-1/2}).
\end{equation}

Moreover, from \eqref{e:alt-def-of-y}, we have that $D^2 u(y) \geq D^2
\varphi(y-x)$. In particular the negative part of the Hessian of
$\varphi$ controls the Hessian of $u$: $(D^2 u(y))^- \leq (D^2
\varphi(y-x))^-$. Combining this fact with \eqref{e:the-equation} we
obtain
\[ |D^2 u(y)| \leq C \left( (D^2 \varphi(y-x))^- + 1+|y-x|^{-1/2}
\right) \leq C (1+|y-x|^{-3/2}).\]

We now differentiate \eqref{e:implicit} (recall that this is a valid
computation for $y \in \mathcal T \setminus E$) and obtain
\begin{equation}\label{eq:differentiate}
 D^2 u(y) = D^2 \varphi(y-x) (I - Dm(y)). 
\end{equation}
Therefore,
\begin{align*}
|Dm(y)| &= D^2 \varphi(y-x)^{-1} \left( D^2 \varphi(y-x) - D^2 u(y) \right)\\
&\leq \|D^2 \varphi(y-x)^{-1}\| \ \|D^2 \varphi(y-x) - D^2 u(y)\|,\\
&\leq C
\end{align*}
where $C$ is a universal constant. For the last inequality we used
that $\|D^2 \varphi(y-x)^{-1}\| = C |x-y|^{3/2}$ and $\|D^2
\varphi(y-x) - D^2 u(y)\| \le C (1+ |x-y|^{-3/2})$. Note how the dependence
on $|x-y|$ cancels out. This step would not work for some other
choices of $\varphi$, for example $\varphi(x) = -|x|$.

Thus, we obtained that $|Dm| \leq C$ almost everywhere in $\mathcal
T$, for a universal constant $C$. We can replace this estimate in
\eqref{e:jacobian} and obtain
\[ |U| \leq \int_{\mathcal T} C^d \dd y = C^d |\mathcal T|.\]

This gives us a lower bound for the measure of the set of contact
points $\mathcal T$. Thus, $|\mathcal T| \geq \delta |B_1|$ for some
$\delta>0$. Since $\mathcal T \subset \{u \leq M-1\}$, we obtain the
contradiction with \eqref{e:contradicting-hypothesis} and finish the
proof.
\end{proof}

\begin{remark}
Note that in order for the formula \eqref{e:jacobian} to hold, we need
$\grad \Gamma$ to be Lipschitz in $\mathcal T$. This may be the most
important difficulty of the viscosity solution adaptation of the
argument.

For example, just the fact that $\Gamma$ is concave implies that
$D^2\Gamma$ is well defined almost everywhere, but yet that does not
imply the formula \eqref{e:jacobian} since a cone functions
$\Gamma(x)=|x|$ would be a counterexample for $\mathcal T = \{0\}$ if
we understand $\grad \Gamma$ in the sense of subdifferentials.

In this case we also have $\Gamma$ differentiable in $\mathcal
T$. Moreover, it is not hard to see that $\Gamma$ is pointwise
$C^{1,1}$ at every point $y \in \mathcal T$ in the sense that there
exists a constant $C$ and $b \in \R^d$ such that $-C|h|^2 \leq
\Gamma(y+h) - \Gamma(y) - b \cdot h \leq C|h|^2$. These facts allow us
to differentiate \eqref{e:implicit} and get
\eqref{eq:differentiate}. 
\end{remark}

\section{A barrier function and the doubling property}
\label{sec:barrier}

Consider the barrier function $b(x) = |x|^{-p}$. Assume initially that
$\ts=0$. We compute, for $x \in B_2 \setminus \{0\}$,
\[
\begin{aligned}
M^-(D^2 b, \grad b) &= \lambda p(p+1) |x|^{-p-2} - \Lambda (d-1) p |x|^{-p-2} - \Lambda p |x|^{-p-1} \\
&= p |x|^{-p-2} \left( \lambda (p+1) - \Lambda (d-1) - \Lambda |x| \right)\\
&\geq p |x|^{-p-2} \left( \lambda (p+1) - \Lambda (d+1) \right)\\&
\geq p |x|^{-p-2}
 \ \ \text{ if $p$ is large enough.}
\end{aligned}
\]
Thus, the function $b(x) = |x|^{-p}$ is a sub-solution of the Pucci
equation $M^-(D^2 b,\nabla b) \geq 0$ in $B_2 \setminus \{0\}$ with
$\gamma =0$. Likewise, it will be a sub-solution of $M^-(D^2 b, \grad
b)~\geq~0$ in $B_2 \setminus \{0\}$ provided that $\ts$ is chosen
smaller than the minimum norm of its gradient.

Using this barrier function, we prove the following doubling property
for lower bounds of super-solutions.

\begin{lemma}[Doubling property for super-solutions] \label{l:doubling}
There exists a small constant $\eps_0>0$ depending on
$\lambda,\Lambda$ and dimension such that if $u \geq 0$ is a
super-solution $M^-(D^2 u, \grad u) \leq 1$ in $B_2$ and $u > M$ in
$B_{1/4}$ for some large constant $M$, then $u > 1$ in $B_1$.
\end{lemma}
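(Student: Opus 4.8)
The plan is to use the barrier function $b(x) = |x|^{-p}$ constructed just above, comparing it with $u$ on the annulus $B_2 \setminus B_{1/4}$. First I would fix $p$ large enough (depending on $\lambda, \Lambda, d$) so that $M^-(D^2 b, \grad b) \geq p |x|^{-p-2} \geq 0$ on $B_2 \setminus \{0\}$, and simultaneously choose $\eps_0$ small enough that $|\grad b| = p|x|^{-p-1} \geq \eps_0$ on the annulus $B_2 \setminus B_{1/4}$ (where $|x| \leq 2$, so $|\grad b| \geq p 2^{-p-1}$); this makes the extremal inequality for $b$ meaningful in the region of interest. Then I would form the comparison function
\[ w(x) = \beta\bigl( b(x) - 2^{-p}\bigr), \]
which vanishes on $\bdary B_2$, for a small constant $\beta > 0$ to be chosen, and I claim $u \geq w$ on the annulus $A := B_2 \setminus B_{1/4}$.

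To prove the comparison, I would check the inequality on $\bdary A = \bdary B_2 \cup \bdary B_{1/4}$. On $\bdary B_2$ we have $w = 0 \leq u$ since $u \geq 0$. On $\bdary B_{1/4}$ we have $b = 4^p$, so $w = \beta(4^p - 2^{-p}) \leq \beta 4^p$, while $u > M$ there; hence choosing $\beta := M \, 4^{-p}$ (or anything smaller, e.g. $\beta = 1 \cdot 4^{-p}$ will do once $M$ is large) gives $w \leq M < u$ on $\bdary B_{1/4}$. On the interior of $A$, $w$ is a subsolution of the Pucci equation: $M^-(D^2 w, \grad w) \geq 0 > -C_0$, using that $M^-$ scales appropriately under multiplication by the positive constant $\beta$ and that $\grad w$ is large there. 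Hence by the comparison principle for viscosity solutions (the ordering of a subsolution of $M^- \geq 0$ below a supersolution of $M^- \leq 1$ — here one should be a little careful and instead compare $u$ with a subsolution of $M^- \geq 1$, or note $u \geq 0$ gives enough slack, or simply absorb the constant $1$ by enlarging $p$ so that $M^-(D^2 b, \grad b) \geq 1$ rather than $\geq 0$), we conclude $u \geq w$ throughout $A$. Evaluating at any $x \in B_1 \setminus B_{1/4}$ gives
\[ u(x) \geq \beta\bigl(|x|^{-p} - 2^{-p}\bigr) \geq \beta\bigl(1 - 2^{-p}\bigr), \]
and choosing $M$ large enough (relative to the universal $p$ and hence $\beta$) makes the right-hand side exceed $1$. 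Combined with $u > M > 1$ on $B_{1/4}$ itself, we get $u > 1$ on all of $B_1$.

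The main obstacle I anticipate is bookkeeping the constant $1$ on the right-hand side of $M^-(D^2 u, \grad u) \leq 1$ and making sure the barrier genuinely dominates it: one cannot simply use $M^-(D^2 b, \grad b) \geq 0$; instead I would enlarge $p$ so that the computation above yields $M^-(D^2 b, \grad b) \geq p|x|^{-p-2}\bigl(\lambda(p+1) - \Lambda(d+1)\bigr) \geq 1$ on $B_2$ (possible since $|x|^{-p-2} \geq 2^{-p-2}$ there and the bracket grows linearly in $p$), after which $w = \beta(b - 2^{-p})$ with $\beta \leq 1$ still satisfies $M^-(D^2 w, \grad w) \geq \beta \geq$ something, and one only needs $M^-(D^2 w, \grad w) \geq 1 \geq M^-(D^2 u, \grad u)$ — so actually it is cleanest to keep $\beta$ bounded below, or to note that only $M^-(D^2(u - w)) \leq 0$-type reasoning is needed. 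A secondary technical point is invoking the comparison principle in the viscosity framework on the annulus with the operator $M^-$, which requires that $|\grad w|$ stays above $\gamma$ where the touching occurs; this is exactly why $\eps_0$ is chosen depending on $p$, i.e. on $\lambda, \Lambda, d$, matching the statement of the lemma. The scaling remarks from Section~\ref{s.pre} also need to be quoted to pass between $M^-$ and $M^-_{r,K}$ if one rescales, though here working directly on $B_2$ avoids that.
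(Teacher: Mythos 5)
Your proposal is correct and essentially reproduces the paper's proof: both use the same power barrier $b(x)=|x|^{-p}$, both form the same comparison function (the paper writes $B(x)=\frac{M}{2\cdot 4^p}\bigl(|x|^{-p}-2^{-p}\bigr)$, i.e., your $w$ with $\beta=M/(2\cdot 4^p)$), both check the ordering on $\partial B_2$ and $\partial B_{1/4}$, both choose $M$ large so the barrier dominates the right-hand side $1$ and satisfies the gradient lower bound in $B_1$, and both conclude by sliding the barrier. The only cosmetic difference is that the paper simply keeps $M^-(D^2 b,\nabla b)\ge p|x|^{-p-2}$ and lets the factor $M$ push $M^-(D^2 B,\nabla B)\ge 2$, rather than re-tuning $p$ to make $M^-(D^2 b,\nabla b)\ge 1$ directly, but this is the same bookkeeping you discuss; your concern is resolved exactly as you surmise, by keeping $\beta$ proportional to $M$.
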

\begin{remark}
The constant $M$ depends on $\lambda$, $\Lambda$, $\gamma$ and dimension.
\end{remark}
\begin{proof}
We compare the function $u$ with 
\[ B(x):= M \frac{(|x|^{-p} - 2^{-p})}{2 \cdot 4^p} .\]
We choose $M \ge 1$ sufficiently large so that $B \geq 1$ in $B_1$ and
$|\grad B| \geq \gamma$ in $B_1$.

We have 
\begin{align*}
M^-(D^2 B, \grad B) & \ge \frac{M}{2 \cdot 4^p} M^-(D^2b,\nabla b) \\
& \ge \frac{M}{2 \cdot 4^p} p 2^{-p-2} \\
& \ge 2 \quad \text{ for $M$ large enough.}
\end{align*}

 Moreover, $B = 0$ on $\partial B_2$ and $B < M$ in $\partial
 B_{1/4}$. Therefore $B \leq u$ in the ring $B_2 \setminus B_{1/4}$
 (this is the comparison principle between the viscosity
 super-solution $u$ and the classical sub-solution $B$, which follows
 directly from the definition of viscosity solution).

Therefore, we have $u \geq B \geq 1$ in $B_1$. Moreover, for $\eps = \min_{B_{1/4}} (u/M-1)$ we also have $u \geq (1+\eps) M > 1$ in $B_1$, which finishes the proof.
\end{proof}

Combining Lemmas~\ref{l:measure-estimate} with \ref{l:doubling}, we obtain the following corollary
\begin{cor} \label{c:weakweakharnack}
There exist small constants $\eps_0>0$ and $\delta>0$, and a large
constant $M>0$, so that if $\ts \leq \eps_0$, for any continuous
function $u:B_2 \to \R$ such that
\begin{align*}
& u \geq 0 \text{ in } B_2, \\
& M^-(D^2u, \grad u) \leq 1 \text{ in } B_2, \\
& | \{u > M\} \cap B_1 | > (1-\delta) |B_1|, 
\end{align*}
then $u > 1$ in $B_1$.
\end{cor}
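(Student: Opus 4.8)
The plan is to combine the measure estimate (Lemma~\ref{l:measure-estimate}) with the doubling property (Lemma~\ref{l:doubling}) by a straightforward rescaling argument, so that the measure hypothesis on $B_1$ first forces pointwise positivity on a small ball, which is then propagated outward to all of $B_1$. The constants $\eps_0$, $\delta$, $M$ in the corollary will be obtained from those of the two lemmas after accounting for a fixed rescaling factor of $2$ between $B_1$ and $B_2$.

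Concretely, I would argue as follows. Starting from $u$ as in the hypotheses of the corollary, apply Lemma~\ref{l:measure-estimate} directly (its hypotheses are $u \ge 0$ on $B_1$, $M^-(D^2u,\grad u)\le 1$ on $B_1$, and $|\{u>M\}\cap B_1| > (1-\delta)|B_1|$, all of which are in force). This yields $u > 1$ on $B_{1/4}$. Now I want to feed this into Lemma~\ref{l:doubling}, which is stated on $B_2$: it requires $u \ge 0$ and $M^-(D^2u,\grad u)\le 1$ on $B_2$ — both given — and $u > M$ on $B_{1/4}$. But we only obtained $u > 1$ on $B_{1/4}$, not $u > M$. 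The fix is to rescale \emph{before} applying the doubling lemma: I would first apply Lemma~\ref{l:measure-estimate} on the ball $B_{1/4}$ instead, after rescaling. Actually the cleanest route is: after getting $u > 1$ on $B_{1/4}$ from the measure estimate, set $w = Mu$ (or rather work on a rescaled ball) so that $w > M$ on the appropriate small ball, then apply the doubling lemma. One must check the scaling of the equation: by the Preliminaries, multiplying $u$ by $K\ge 1$ and shrinking the domain keeps $M^- (D^2 w,\grad w) \le 1$ for the rescaled operator, and the condition $\gamma \le \eps_0$ guarantees the gradient threshold is respected; here $K = M \ge 1$ is the only delicate point, since it \emph{raises} the gradient threshold, but the doubling lemma's barrier argument only needs $\gamma$ small relative to universal constants.

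A slightly more careful version: apply Lemma~\ref{l:measure-estimate} to the function $u$ restricted to $B_{1/2}$ (rescaled to $B_1$), whose hypotheses hold by the scaling remarks, to conclude $u > 1$ on $B_{1/8}$; then consider $v(x) = M\,u(x)$, which satisfies $M^-(D^2 v,\grad v)\le M$ — not quite $\le 1$ — so instead I would directly apply Lemma~\ref{l:doubling} on a small ball centered at the origin where $u$ is already larger than $M$ after using the doubling lemma \emph{iteratively} along a chain of balls of geometrically increasing radii $1/4, 1/2, 1$. Each application of Lemma~\ref{l:doubling} (suitably rescaled and centered) takes ``$u > M$ on $B_r$'' to ``$u > 1$ on $B_{2r}$'', and one can bootstrap: since $u > 1$ implies, after one more application of the measure/doubling machinery or simply by the remark $u \ge (1+\eps)M$ in the doubling lemma's proof, we actually keep a multiplicative gain that lets the chain close. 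The honest statement is that one should apply Lemma~\ref{l:doubling} once, with the rescaling $x \mapsto 2x$ turning $B_2$-statements into $B_1$-statements, to the function $u$ which by Lemma~\ref{l:measure-estimate} satisfies $u > M$ on a ball of radius $1/16$ after choosing $M$ in the corollary to be the larger of the two $M$'s and using the doubling lemma's quantitative gain to upgrade $u>1$ to $u>M$ on a smaller ball.

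The main obstacle — and the step requiring genuine care rather than routine work — is reconciling the two different roles of the constant $M$: Lemma~\ref{l:measure-estimate} \emph{outputs} $u > 1$ while Lemma~\ref{l:doubling} \emph{demands input} $u > M$. The resolution exploits the fact that the measure estimate is scale-invariant, so it can be rerun on smaller and smaller balls, each time trading a factor in the radius for the needed amplitude; alternatively, one observes (as the proof of Lemma~\ref{l:doubling} already records via $\eps = \min_{B_{1/4}}(u/M - 1)$) that the doubling lemma actually produces $u \ge (1+\eps)M$, a strict multiplicative improvement, so the conclusion $u > 1$ on $B_1$ follows after finitely many iterations whose number depends only on $\lambda,\Lambda,d$. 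One also has to track that $\gamma \le \eps_0$ remains sufficient throughout — this is where the smallness of $\eps_0$ (taken as the minimum of the $\eps_0$'s from the two lemmas, possibly further shrunk by the rescaling factor) is used — and that the final $\delta$ is inherited unchanged from Lemma~\ref{l:measure-estimate}.
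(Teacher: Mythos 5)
You correctly identify the two ingredients (the measure estimate, Lemma~\ref{l:measure-estimate}, and the doubling property, Lemma~\ref{l:doubling}) and you correctly spot the mismatch between them — the measure estimate outputs $u>1$ on $B_{1/4}$ while the doubling lemma demands $u>M_2$ on $B_{1/4}$ as input. But the resolution you then grope for is wrong, and the various fixes you float do not close the gap. Multiplying by $K=M\ge 1$ does not preserve $M^-(D^2 w,\grad w)\le 1$: it gives $M^-\le M$, as you yourself later notice. Iterating the doubling lemma cannot even start, because after one application of the measure estimate you only have $u>1$, not $u>M_2$, and the doubling lemma never produces an output strong enough to serve as its own input. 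And the ``quantitative gain'' $u\ge (1+\eps)M$ in the doubling proof is useless here, because $\eps = \min_{B_{1/4}}(u/M-1)$ is merely positive, not bounded below universally.

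The step you are missing is elementary and goes in the opposite direction from what you tried: \emph{divide} rather than multiply. Take the corollary's $M$ to be $M_1 M_2$ (the product of the two lemmas' constants, as the paper's remark following the corollary states), and set $v=u/M_2$. Since $M_2\ge 1$, dividing shrinks the right-hand side, so $v\ge 0$ and $M^-(D^2 v,\grad v)\le 1/M_2\le 1$ in $B_1$ (any test function $\phi$ touching $v$ from below with $|\grad\phi|\ge\gamma$ gives $M_2\phi$ touching $u$ from below with $|\grad(M_2\phi)|\ge M_2\gamma\ge\gamma$). Moreover $\{v>M_1\}=\{u>M_1M_2\}=\{u>M\}$, so the measure hypothesis of Lemma~\ref{l:measure-estimate} holds for $v$ at level $M_1$. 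The lemma then gives $v>1$ on $B_{1/4}$, that is, $u>M_2$ on $B_{1/4}$, which is exactly the input Lemma~\ref{l:doubling} needs; applying it yields $u>1$ on $B_1$. The constant $\eps_0$ of the corollary is the smaller of the two lemmas' $\eps_0$'s and $\delta$ is inherited from the measure estimate; no iteration or chain of balls is required.
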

\begin{remark}
Note that the constant $M$ in Corollary \ref{c:weakweakharnack} is the
product of the two constants $M$ in Lemma \ref{l:measure-estimate} and
Lemma \ref{l:doubling}.
\end{remark}
\begin{proof}
Let $M_1$ be the constant from Lemma~\ref{l:measure-estimate} and
$M_2$ be the one coming from Lemma~\ref{l:doubling}. Then the function
$v = u / M_2$ satisfies the assumption of
Lemma~\ref{l:measure-estimate} for $M_2 \ge 1$ (which can be assumed
without loss of generality). We conclude that $v > 1$ in $B_{1/4}$,
\textit{i.e.} $u > M_2$ in $B_{1/4}$. We then can apply
Lemma~\ref{l:doubling} and get $u > 1$ in $B_1$. 
\end{proof}

The following corollary is just a scaled version of the above result.

\begin{cor} \label{c:scaled-weakweakharnack}
There exists small constants $\eps_0>0$ and $\delta>0$, and a large
constant $M>0$, so that if $\ts \leq \eps_0$, for any $r \leq 1$,
$\kappa \geq 1$ and a continuous function $u:\overline{B_r} \to \R$
such that
\begin{align*}
& u \geq 0 \text{ in } B_r, \\
& M^-(D^2u, \grad u) \leq \kappa \text{ in } B_r, \\
& | \{u > \kappa M\} \cap B_{r/2} | > (1-\delta) |B_{r/2}|, 
\end{align*}
then $u > \kappa$ in $B_{r/2}$.
\end{cor}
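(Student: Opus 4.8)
The plan is to obtain Corollary~\ref{c:scaled-weakweakharnack} from Corollary~\ref{c:weakweakharnack} by a straightforward rescaling, using the transformation rules for $M^-$ recorded in Section~\ref{s.pre}. Given $u$, $r$ and $\kappa$ as in the statement, I would set
\[ w(x) := \frac{1}{\kappa}\, u\!\left(\frac r2\, x\right), \qquad x \in \overline{B_2}, \]
which is well defined and continuous since $\tfrac r2\, x \in \overline{B_r}$, and then check that $w$ satisfies the three hypotheses of Corollary~\ref{c:weakweakharnack}.

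First, $w \ge 0$ in $B_2$ because $u \ge 0$ in $B_r$. Second, for the equation: $w$ is exactly the rescaling $v(x) = K\, u(x_0 + \rho x)$ with $x_0 = 0$, $\rho = r/2$ and $K = 1/\kappa$. Since $r \le 1$ and $\kappa \ge 1$ one has $\rho \le 1$ and $\rho K = r/(2\kappa) \le 1$, which is precisely the range in which the scaled operator $M^-_{\rho,K}$ is dominated by $M^-$: the condition $\rho \le 1$ handles the first order term, while $\rho K \le 1$ keeps the gradient threshold below $\ts$. Hence the scaling discussion of Section~\ref{s.pre} yields $M^-(D^2 w, \grad w) \le \rho^2 K \kappa = r^2/4 \le 1$ in $B_2$; the parameter $\ts$ is unchanged, so the hypothesis $\ts \le \eps_0$ carries over. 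Third, the change of variables $y = \tfrac r2\, x$ maps $B_1$ bijectively onto $B_{r/2}$ and $\{w > M\} = \{x : u(\tfrac r2\, x) > \kappa M\}$ onto $\{u > \kappa M\}$, multiplying both $d$-dimensional measures by the same factor $(2/r)^d$. Therefore $|\{w > M\}\cap B_1| > (1-\delta)|B_1|$ is equivalent to the assumed density bound $|\{u > \kappa M\}\cap B_{r/2}| > (1-\delta)|B_{r/2}|$.

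Once these three points are in place, Corollary~\ref{c:weakweakharnack} gives $w > 1$ in $B_1$, that is $u > \kappa$ in $B_{r/2}$, which is the desired conclusion. There is essentially no genuine obstacle here; the only point that will need a little care is the second one — confirming that with the multiplier $K = 1/\kappa \le 1$ and dilation $\rho = r/2 \le 1$ the rescaled inequality is still an inequality for the \emph{same} extremal operator $M^-$ (with the same $\lambda$, $\Lambda$, $\ts$) and not merely for a scaled variant $M^-_{\rho,K}$ — but this is exactly what the Scaling subsection of Section~\ref{s.pre} was designed to supply.
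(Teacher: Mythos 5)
Your proof is correct and follows the paper's proof exactly: rescale to $w(x)=u(rx/2)/\kappa$, check that the three hypotheses of Corollary~\ref{c:weakweakharnack} carry over under this rescaling, and apply that corollary. One minor wording slip: the inequality you actually need is $M^-\le M^-_{\rho,K}$ pointwise (so that $M^-_{\rho,K}(D^2w,\grad w)\le \rho^2 K\kappa = r^2/4\le 1$ implies $M^-(D^2w,\grad w)\le 1$), not ``$M^-_{\rho,K}$ dominated by $M^-$''; the conditions $\rho\le 1$ and $\rho K\le 1$ you correctly identify are precisely what make the first-order term and the gradient threshold go the right way for this comparison.
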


\begin{proof}
The scaled function $u_r(x) = u(rx/2)/\kappa$ satisfies the scaled equation
\[ M^-_{r/2,\kappa}(D^2u_r, \grad u_r) \leq r^2 \leq 1 \text{ in } B_2. \]
 We remark that it satisfies a stronger equation since $\ts$ can be replaced
 by the smaller value $\kappa^{-1} r \gamma$. So we can apply Corollary \ref{c:weakweakharnack} to $u_r$ and obtain the result.
\end{proof}

\section{The $L^\eps$ estimate}
\label{sec:leps}

Combining Corollary \ref{c:weakweakharnack} with Lemma~\ref{t:inkspots-vitali}, we obtain the $L^\eps$ estimate.

\begin{thm}[$L^\eps$ estimate] \label{t:Lepsilon}
There exists  small constants $\eps_0>0$ and $\eps>0$, so that if $\ts
\leq \eps_0$, for any lower semi-continuous function $u:B_2 \to \R$ such that
\begin{align*}
& u \geq 0 \text{ in } B_2, \\
& M^-(D^2u, \grad u) \leq 1 \text{ in } B_2, \\
& \inf_{B_1} u \leq 1,
\end{align*}
then 
\[ |\{ u > t \} \cap B_1 | \leq C t^{-\eps}\]
for all $t>0$.
\end{thm}
\begin{remark}
This estimate is referred to as the $L^\eps$ estimate since it yields
an estimate on $\int_{B_1} u^\eps (x) \, dx$ (depending on $C$ only). 
\end{remark}
\begin{proof}
In order to prove the result, we will prove the equivalent expression
\[ |\{u > M^k\} \cap B_1| \leq \tilde{C} M^{-\eps k} \]
where $M$ is the constant from Corollary
\ref{c:scaled-weakweakharnack} and $\eps>0$ has to be properly chosen.

Let $A_k := \{u > M^k\} \cap B_1$, which are open sets. Since $\inf_{B_1} u \leq 1$, from Corollary \ref{c:weakweakharnack}, $|A_1| \leq (1-\delta)|B_1|$. Since $A_k \subset A_1$ for all $k>1$, then we also have $|A_k| \leq (1-\delta)|B_1|$ for all $k$.

We note that Corollary \ref{c:scaled-weakweakharnack}, with $\kappa=M^k$, says that every time a ball $B \subset B_1$ satisfies 
$|B \cap A_{k+1}| > (1-\delta) |B|$, then $B \subset A_{k}$. Using Lemma\ref{t:inkspots-vitali}, we obtain
\[ |A_{k+1}| \leq (1-c\delta) |A_k|, \]
and therefore, by induction, $|A_k| \leq (1-c\delta)^{k-1} (1-\delta) |B_1| = \tilde{C} M^{-\eps k}$, where $-\eps = \log(1-c\delta) / \log M$ and $\tilde{C}=(1-c\delta)^{-1}(1-\delta)|B_1|$.

This finishes the proof.
\end{proof}

The following lemma is a scaled version of Theorem \ref{t:Lepsilon}.
\begin{lemma}[scaled $L^\eps$ estimate] \label{l:improvementOfOscillation}
There exists small constants $\tilde \eps_0>0$, $\eps_1>0$ and
$\theta>0$, so that if $\ts \leq \tilde \eps_0$, for any $r \leq 1$, $\alpha \in (0,1)$, and a lower semi-continuous function $u:\overline{B_{2r}} \to \R$ such that
\begin{align*}
& u \geq 0 \text{ in } B_{2r}, \\
& M^-(D^2u, \grad u) \leq \eps_1 \text{ in } B_{2r}, \\
& |\{ u > r^\alpha \} \cap B_{r} | \geq \frac 12 |B_{r}|,
\end{align*}
then $u > \eps_1 r^\alpha$ in $B_r$.
\end{lemma}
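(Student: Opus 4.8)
The strategy is to normalise $u$ so that the measure hypothesis becomes a lower bound on the measure of a superlevel set, and then to invoke the $L^\eps$ estimate (Theorem~\ref{t:Lepsilon}) in contrapositive form. The only delicate point is that the normalisation multiplies $u$ by a large constant, which dilates gradients, so one must check that the normalised function still satisfies an extremal equation whose gradient threshold stays below the $\eps_0$ of Theorem~\ref{t:Lepsilon}; this is where the hypothesis $\ts\le\tilde\eps_0$ is used.

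First fix $\eps_1\in(0,1)$ small enough, depending only on $\lambda$, $\Lambda$ and $d$, that $C\,\eps_1^{\eps}<\tfrac12|B_1|$, where $C$ and $\eps$ are the constants of Theorem~\ref{t:Lepsilon} (they depend only on $\lambda$, $\Lambda$, $d$, not on $\ts\le\eps_0$, $r$ or $\alpha$). Then set $\tilde\eps_0:=\eps_0\,\eps_1$, with $\eps_0$ from Theorem~\ref{t:Lepsilon}. Given $u$ as in the statement, define
\[ \hat v(x):=\frac{u(rx)}{\eps_1\,r^{\alpha}},\qquad x\in\overline{B_2}. \]
Then $\hat v\ge 0$ in $B_2$, and changing variables,
\[ \bigl|\{\hat v>\eps_1^{-1}\}\cap B_1\bigr| \;=\; r^{-d}\,\bigl|\{u>r^{\alpha}\}\cap B_r\bigr| \;\ge\; \tfrac12\,|B_1|. \]

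Next I would check the remaining hypotheses of Theorem~\ref{t:Lepsilon}. Writing $\hat v(x)=K\,u(rx)$ with $K=\eps_1^{-1}r^{-\alpha}\ge1$, the scaling computation of Section~\ref{s.pre} gives that $\hat v$ satisfies $M^-_{r,K}(D^2\hat v,\grad\hat v)\le r^{2}K\eps_1=r^{2-\alpha}$ in $B_2$; this scaled operator carries the gradient coefficient $r\le1$ and the gradient threshold $rK\ts=\eps_1^{-1}r^{1-\alpha}\ts$. Since $r\le1$ and $\alpha\le1$ we have $r^{2-\alpha}\le1$; and since $r\le1$, the scaled operator $M^-_{r,K}$ dominates pointwise the standard operator $M^-$ taken with that same threshold $\eps_1^{-1}r^{1-\alpha}\ts$ (only the coefficient $r\le1$ in front of $|\grad\cdot|$ differs, which merely makes $M^-_{r,K}$ larger), so $\hat v$ also satisfies $M^-(D^2\hat v,\grad\hat v)\le1$ in $B_2$ when $M^-$ is understood with that gradient threshold. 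Finally $r^{1-\alpha}\le1$ and $\ts\le\tilde\eps_0=\eps_0\eps_1$ give $\eps_1^{-1}r^{1-\alpha}\ts\le\eps_0$, so all hypotheses of Theorem~\ref{t:Lepsilon} hold for $\hat v$ (the role of its gradient threshold being played by $\eps_1^{-1}r^{1-\alpha}\ts$).

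To finish, suppose for contradiction that $\hat v(x_0)\le1$ for some $x_0\in B_1$, so $\inf_{B_1}\hat v\le1$. Theorem~\ref{t:Lepsilon} then yields $|\{\hat v>t\}\cap B_1|\le C\,t^{-\eps}$ for all $t>0$; with $t=\eps_1^{-1}$ and the measure bound above this gives $\tfrac12|B_1|\le C\,\eps_1^{\eps}$, contradicting the choice of $\eps_1$. Hence $\hat v>1$ throughout $B_1$, which unravels to $u>\eps_1 r^{\alpha}$ in $B_r$ (so $\theta=\eps_1$ works). The step I expect to require real care is the one just above: multiplying $u$ by the large factor $\eps_1^{-1}$ dilates gradients, so a priori the equation for $\hat v$ is only known where $|\grad\hat v|$ is rather large, and one must be certain this threshold stays below $\eps_0$ — which holds precisely because $\ts$ was assumed no larger than $\eps_0\eps_1$. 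The rest is the scaling bookkeeping of Section~\ref{s.pre} plus a direct application of the already-established $L^\eps$ estimate.
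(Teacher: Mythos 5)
Your proof is correct and follows essentially the same route as the paper: both define the rescaled function $\eps_1^{-1}r^{-\alpha}u(rx)$ (the paper calls it $\tilde u$ with $\tau=\eps_1^{-1}$), check the scaled equation and the gradient threshold $\eps_1^{-1}r^{1-\alpha}\ts\le\eps_0$, and then apply Theorem~\ref{t:Lepsilon} in contrapositive form with the level set bound. The only differences are notational, and you are also right that the parameter $\theta$ in the lemma's statement plays no role (the conclusion already names $\eps_1$).
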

\begin{remark}
As we shall see when proving this lemma, $\tilde \eps_0 = \eps_0
\eps_1$ where $\eps_0$ is given by Lemma~\ref{t:Lepsilon}.
\end{remark}
\begin{proof}
Let $\tau$ be the universal constant such that $C \tau^{-\eps} < |B_1|/2$, where $C$ and $\eps$ are the constants of Theorem~\ref{t:Lepsilon}. Consider the function
$\tilde u(x) = \tau r^{-\alpha} u(rx)$. It satisfies the properties
\begin{align*}
& \tilde u \geq 0 \text{ in } B_2, \\
& M^-(D^2 \tilde u, \grad \tilde u) \leq \tau r^{2-\alpha} \eps_1 \text{ in } B_2, \\
& |\{ \tilde u > \tau \} \cap B_1 | \geq \frac 12 |B_1| > C \tau^{-\eps},
\end{align*}
with $\tau r^{1-\alpha} \ts$ instead of $\ts$.

Let us choose $\eps_1 = \tau^{-1}$. Since
$r \leq 1$, we have
\[M^-(D^2 \tilde u, \grad \tilde u) \leq 1 \text{ in } B_2.\]
We now apply Theorem \ref{t:Lepsilon} and obtain that $\tilde u > 1$
in $B_1$ provided that $\tau r^{1-\alpha} \ts \leq \eps_0$. We just
have to choose $\tilde \eps_0 = \eps_0 \tau^{-1}= \eps_0 \eps_1$ since
$r^{1-\alpha} \leq 1$. Scaling back, we obtain $u > \eps_1 r^\alpha$
in $B_r$.
\end{proof}

\section{H\"older continuity}
\label{sec:holder}

In this section, we derive the H\"older estimates of Theorem~\ref{t:holder} 
from the (scaled) $L^\eps$ estimate.
\begin{proof}[Proof of Theorem~\ref{t:holder}]
We start by normalizing the solution $u$. Let
\[v(x) = \frac{u(\rho x)}{C_0(1+\eps_1^{-1})},\]
where $\rho \leq 1$ and $\eps_1$ is the constant from Lemma \ref{l:improvementOfOscillation}. The function $v$ satisfies the estimates
\begin{align*}
& M^-(D^2v, \grad v) \leq \eps_1 \text{ in } B_1, \\
& M^+(D^2v, \grad v) \geq -\eps_1 \text{ in } B_1, \\
& \|v\|_{L^\infty(B_1)} \leq 1,
\end{align*}
with $\ts$ replaced by 
$\frac{\rho}{C_0(1+\eps_1^{-1})} \ts$. Thus, we pick $\rho \leq 1$ such that
\[ \frac{\rho \ts}{C_0(1+\eps_1^{-1})} \leq \tilde \eps_0,\]
where $\tilde \eps_0$ is given by Lemma~\ref{l:improvementOfOscillation}.
It is enough to pick 
\[ \rho = \min \left(1, \frac{\tilde \eps_0 C_0(1+\eps_1^{-1})}{\ts} \right).\]

Let $a_k = \min_{B_{2^{-k}}} v$ and $b_k = \max_{B_{2^{-k}}} v$. We will prove that for some $\alpha>0$, 
\begin{equation} \label{e:oscillation} b_k - a_k \leq 2 \times 2^{-\alpha k}.
\end{equation}
 For $k=0$, the statement is obvious since $b_0 \leq \|v\|_{L^\infty(B_1)}$ and $a_0 \geq -\|v\|_{L^\infty(B_1)}$, thus $b_0 - a_0 \leq 2$. Now we proceed by induction.

Assume that $b_k - a_k \leq 2 \times 2^{-\alpha k}$ and let us prove
that $b_{k+1}-a_{k+1} \leq 2 \times 2^{-\alpha (k+1)}$. If $b_k - a_k
\leq 2 \times 2^{-\alpha (k+1)}$, then we are done since
$b_{k+1}-a_{k+1} \leq b_k -a_k$. Hence, we can assume that 
$\frac{b_k-a_k}2 \ge 2^{-\alpha (k+1)}$.

Let $m_k = (a_k+b_k)/2$. We have two alternatives. Either $|\{v>m_k\} \cap B_{2^{-k-1}}| \geq |B_{2^{-k-1}}|/2$ or $|\{v \leq m_k\} \cap B_{2^{-k-1}}| \geq |B_{2^{-k-1}}|/2$. In the first case we will prove that $a_{k+1}$ is larger than $a_k$, whereas in the second case we will show that $b_{k+1}$ is smaller than $b_k$.

Let us assume the first case, i.e. $|\{v>m_k\} \cap B_{2^{-k-1}}| \geq |B_{2^{-k-1}}|/2$. We apply Lemma \ref{l:improvementOfOscillation} to $v - a_k$ with $r = 2^{-k-1}$ to obtain that $v - a_k \geq {\eps_1} 2^{-(k+1)\alpha}$ for some ${\eps_1} > 0$ universal. Therefore, we have that $a_{k+1} \geq a_k + {\eps_1} 2^{-(k+1)\alpha}$. In particular
\[ b_{k+1} - a_{k+1} \leq b_k - a_k - {\eps_1} 2^{-(k+1)\alpha} \leq
(2^{\alpha+1}-{\eps_1})2^{-(k+1)\alpha} \leq 2 \times 2^{-(k+1) \alpha} \]
as soon as $\alpha$ is chosen  small enough so that $2^{\alpha+1} \leq 2+{\eps_1}$.

The estimate \eqref{e:oscillation} implies that $v$ is
$C^\alpha$ at the origin, with
\[ |v(x) - v(0)| \leq 4 |x|^\alpha,\]
for all $x \in B_1$. Scaling back to the function $u$, this means that
for all $x \in B_{\rho}$, 
\begin{align*}
 |u(x) - u(0)| & \leq 4 (1+\eps_1^{-1}) \rho^{-\alpha} C_0 |x|^\alpha \\
& \leq C C_0 |x|^\alpha
\end{align*}
where $C = C(\lambda,\Lambda,d,\gamma/C_0)$. By a standard translation
and covering argument, we have that $u \in C^\alpha(B_{1/2})$ and
\begin{align*}
  [u]_{C^{0,\alpha} (B_{1/2})} & \leq \tilde{C} C_0
\end{align*}
where $\tilde{C}$ differs from $C$ by a universal constant The proof
is now complete.
\end{proof}

\section{Harnack inequality}
\label{sec:harnack}

This section is devoted to the derivation of a Harnack inequality.
\begin{proof}[Proof of Theorem~\ref{t:harnack}]
We first reduce the problem to $C_0=1$ and $\inf_{B_{1/2}} u \le 1$ by
replacing $u$ with $u / (C_0+ \inf_{B_{1/2}}u)$. In particular,
$\gamma$ is replaced with $\gamma/ (C_0 + \inf_{B_{1/2}}u)$.

Let $\beta >0$ and let $h_t(x) = t(3/4-|x|)^{-\beta}$ be
defined in $B_{3/4}$. We consider the minimum value of $t$ such that
$h_t \geq u$ in $B_{3/4}$. The objective of the proof is to show that
this value of $t$ cannot be too large. If $t \le 1$, we are
done. Hence, we further assume that $t \ge 1$. 

Since $t$ is chosen to be the \emph{minimum} value such that $h_t \geq
u$, then there must exist some $x_0 \in B_1$ such that $h_t(x_0) =
u(x_0)$. Let $r = (3/4-|x_0|)/2$. That is, $2r$ is the distance from
$x_0$ and $\partial B_{3/4}$. Let $H_0 := h_t(x_0) = t(2r)^{-\beta}
\ge 1$.

We will estimate the measure of the set $\{u \geq H_0/2\} \cap
B_r(x_0)$ in two different ways. We will get a contradiction if $t$ is
too large.

Let us start by an upper bound of the measure. From Theorem
\ref{t:Lepsilon}, properly rescaled,
\begin{equation} \label{e:smallproportionabove}  
|\{u>H_0\} \cap B_r(x_0)| \leq |\{u>H_0\} \cap B_{3/4}| \leq
H_0^{-\eps} = C t^{-\eps} (2r)^{\beta \eps}.
\end{equation}

Let us now obtain a lower bound. Let $\mu$ be the small universal
constant and $\beta$ be a large universal constant such that
\begin{align*}
 M \left( \left(\frac{2-\mu}{2}\right)^{-\beta} - 1 \right) & \leq
 \frac 12 \\
\frac{(\mu r)^2}{\left( \left(\frac{2-\mu}{2}\right)^{-\beta} - 1
  \right)} & \leq 1, \\
\frac{(\mu r) \ts}{\left(
  \left(\frac{2-\mu}{2}\right)^{-\beta} - 1 \right)} &\leq \eps_0 \\
\beta & \geq \frac{d}{\eps}.
\end{align*}
where $M$ and $\eps_0$ are constants from Corollary \ref{c:weakweakharnack}
and $\eps$ comes from Theorem~\ref{t:Lepsilon}. The reader can check
that choosing 
\[ \beta = \eps^{-1} \max (d,\gamma) \]
and $\mu$ small enough
so that 
\[ \mu \le \frac{\gamma}{\eps_0} \quad \text{and} \quad 
\frac{\ln (1+\mu \gamma /\eps)}{-\ln(1-\mu/2)} \le \frac\gamma\eps
\quad \text{ and } \quad (1-\mu/2)^{-\beta} -1 \le \frac1{2M},\]
we get the four desired inequalities.

The maximum of $u$ in the ball $B_{\mu r}(x_0)$ is at most the maximum
of $h_t$ which equals $t(2r-\mu r)^{-\beta} =
\left(\frac{2-\mu}{2}\right)^{-\beta} H_0$. Let us define the
function \[ v(x) = \frac{\left(\frac{2-\mu}{2}\right)^{-\beta} H_0
  -u(x_0+\mu r x)}{\left( \left(\frac{2-\mu}{2}\right)^{-\beta} - 1
  \right) H_0}. \] Note that $v(0) = 1$, $v$ is non-negative in $B_1$
and satisfies the equations
\begin{align*}
& M^-(D^2v, \grad v) \leq \frac{(\mu r)^2}{\left( \left(\frac{2-\mu}{2}\right)^{-\beta} - 1 \right) H_0} \leq 1 \text{ in } B_1, \\
& M^+(D^2v, \grad v) \geq -\frac{(\mu r)^2}{\left( \left(\frac{2-\mu}{2}\right)^{-\beta} - 1 \right) H_0} \geq -1 \text{ in } B_1
\end{align*}
with $\ts$ replaced by $\frac{(\mu r) \ts}{H_0 \left(
  \left(\frac{2-\mu}{2}\right)^{-\beta} - 1 \right)} \leq \eps_0$
(because of the choice of $\mu$ and $\beta$).

We can apply Corollary
\ref{c:weakweakharnack} (in fact, its counterpositive) and obtain
\[ |\{v \leq M\} \cap B_{1/2}| \geq \delta |B_{1/2}|. \]

In terms of the original function $u$, this is an estimate of a set
where $u$ is larger than
\[ H_0 \left(\left(\frac{2-\mu}{2}\right)^{-\beta} - M 
\left( \left(\frac{2-\mu}{2}\right)^{-\beta} - 1 \right) \right) \geq
\frac{H_0}2, \]
because of the choice of $\mu$ and $\beta$. Thus, we obtain the estimate
\[ |\{u \geq H_0/2\} \cap B_{\mu r}(x_0)| \geq \delta |B_{\mu r}|.\]
Together with \eqref{e:smallproportionabove}, this implies that $t$ is
bounded from above (using the fact that $\beta \geq d /\eps$).
\end{proof}

\section*{Acknowledgments}
Cyril Imbert was partially supported  by ANR grant ANR-12-BS01-0008-01.
Luis Silvestre was partially supported by NSF grants DMS-1254332 and
DMS-1065979.

\end{document}